\documentclass[10pt]{article}
\usepackage[top=1in, bottom=1in, left=1.25in, right=1.25in]{geometry}
\usepackage{amsmath,amssymb,amsthm,xcolor,enumerate,bbm,array}
\usepackage[unicode,breaklinks=true,colorlinks=true]{hyperref}

\usepackage{theoremref}
\usepackage{mathrsfs}
\usepackage[]{authblk}
\usepackage{comment}
\usepackage{cancel}
\usepackage{orcidlink}
\usepackage{graphicx}
\usepackage{float}
\usepackage{changepage}
\usepackage{array,multirow,makecell} 
\numberwithin{equation}{section}
\newtheorem{thm}{Theorem}[section]
\newtheorem{cor}[thm]{Corollary}
\newtheorem{lem}[thm]{Lemma}
\newtheorem{prop}[thm]{Proposition}

\newtheorem{remark}[thm]{Remark}
\theoremstyle{definition}

\renewcommand{\div}{\mathop{\rm div}\nolimits}

\newcommand{\esssup} {\mathop{\mathrm{ess\,sup}}}

\allowdisplaybreaks
\begin{document}
	\title{Global well-posedness of strong solutions to the initial-boundary value problem for  a two-dimensional
stress-diffusive Oldroyd-B model  in the creeping flow regime} 
		\author[1]{\rm Yinghui Wang\orcidlink{0000-0002-7565-5525}}
		\author[2]{\rm Shihao Zhang}
		\author[3]{\rm Zhuo Zhang\thanks{Corresponding author.\\ $\quad\quad\quad$ E-mail addresses:  yhwangmath@163.com (Y.H. Wang), zhangshihao@hunnu.edu.cn (S.H. Zhang),  zhangzhuo0614@Outlook.com (Z. Zhang).}}
		\affil[1,2,3]{\footnotesize  MOE-LCSM, School of Mathematics and Statistics, Hunan Normal University, Changsha, Hunan 410081, P. R. China} 
		%
		\date{}
		\maketitle

		
		\begin{abstract}
This paper investigates the global well-posedness of strong solutions to a stress-diffusive
Oldroyd-B system  in two-dimensional smooth bounded domains in the zero Reynolds number (creeping flow) regime. The stress-diffusion term is kept explicit throughout the paper and is understood as the usual center-of-mass diffusion regularization of the Oldroyd-B constitutive equation. In the corresponding non-diffusive creeping-flow setting, the strongest available result is a Beale-Kato-Majda type breakdown criterion for the three-dimensional Cauchy problem due to Kupferman, Mangoubi and Titi [Commun. Math. Sci. 6 (2008)], and global well-posedness remains open even in two dimensions. We prove the global existence and uniqueness of strong solutions for arbitrarily large $H^1$ initial polymeric stresses satisfying the natural non-negativity condition on the conformation tensor. The result covers the initial-boundary value problem on general smooth bounded domains and shows how the Stokes elliptic structure, the preservation of the non-negativity of the conformation tensor, and the stress diffusion combine to close large-data estimates at the $H^1$ level. We also point out a regularity feature specific to the zero Reynolds number regime: the velocity field gains higher spatial regularity from the elliptic Stokes equation than the polymeric stress tensor.
  
			\vspace{4mm}
			
			{\textbf{Keywords:} {O}ldroyd-{B} model; Creeping flow; Complex fluids; Well-posedness}\\
			
			{\textbf{2020 Mathematics Subject Classification:} 35Q35, 76A10, 76A05, 76B03}
		\end{abstract}
		
		\section{Introduction}
		The Oldroyd-B model serves as a fundamental framework for describing the dynamics of viscoelastic fluids and has found extensive applications in fields such as petroleum extraction, polymer processing, and biomedical engineering. Further applications of this model can be found in the references \cite{Bird_1}. The original Oldroyd-B model was introduced by Oldroyd (\cite{Oldroyd_1958}) to describe mathematically viscoelastic effects of certain types of fluids. This type of
fluids is described by the following equations.
  \begin{equation}\label{Oldroyd_B}
			\left\{\begin{aligned} 
				&\mathrm{Re}\big(\partial_t u+(u\cdot\nabla) u\big)+\nabla p-(1-\alpha)\Delta u= \mathrm{div}\tau, \\
				 &\mathrm{We}\big(\partial_t\tau+(u\cdot\nabla)\tau+g_a(\tau,\nabla u) \big)-\varepsilon\Delta\tau+ \tau=2\alpha \mathbb{D}(u), \\
				&\mathrm{div} u=0.
			\end{aligned}\right.
		\end{equation}
		Here, $u$ denotes the fluid velocity, $p$ is the pressure
function of the fluid,  $\tau$ is the elastic part of the stress tensor. The dimensionless numbers $\mathrm{Re}>0$ and $\mathrm{We}>0$ are the Reynolds number and the Weissenberg number, respectively. The constant $\alpha\in (0,1)$ is the retardation parameter, $\varepsilon>0$ is the center-of-mass diffusion coefficient. The quadratic term $g_a(\tau,\nabla u)$ is given by \begin{equation*}
			g_a(\tau,\nabla u)=\tau \mathbb{W}(u) -\mathbb{W}(u)\tau - a(\mathbb{D}(u)\tau+\tau \mathbb{D}(u)),~~\text{for some}~~a \in [-1,1],
		\end{equation*} 
		where $\mathbb{D}(u) = \frac{1}{2}(\nabla u + \nabla^\top u)$ and $\mathbb{W}(u) = \frac{1}{2}(\nabla u - \nabla^\top u)$ denote the deformation tensor and the vorticity tensor, respectively. Here, the velocity gradient is defined by the components  $(\nabla{u})_{ij}=\partial_{j}u_i$.\footnote{Note that in some references, such as \cite{Kupferman_2007}, the author uses the opposite notation, namely $(\nabla{u})_{ij}=\partial_{i}u_j$.}

  In the creeping flow regime (i.e., when the Reynolds number $\mathrm{Re}$ is sufficiently small), the system \eqref{Oldroyd_B} can be approximated by the following system:\footnote{For simplicity we take the stretching parameter $a=1$ and absorb the Weissenberg number into the relaxation rate $\gamma>0$.} 
  \begin{equation}\label{EQ-all} 
			\left\{\begin{aligned} 
				&- (1 - \alpha) \Delta u + \nabla p = \operatorname{div} \tau  , \\
				&\partial_{t} \tau + (u \cdot \nabla) \tau - (\nabla u\tau+ \tau\nabla^\top u  ) + \gamma\tau -\varepsilon\Delta\tau= 2\alpha \mathbb{D}(u),\\
                &\div u = 0.
			\end{aligned} \right.
		\end{equation} 
        The zero-Reynolds-number reduction is standard in slow viscoelastic flows, where the velocity is determined instantaneously by the stress through a Stokes system; see, for instance, \cite{Kupferman_2007}. The term $-\varepsilon\Delta\tau$ is the center-of-mass, or stress-diffusion, regularization used in several physical and numerical models of polymeric fluids and can describe shear-banding and vorticity-banding effects; see \cite{Bhave2,Dhont_2008,Malek_etal_2018}. Thus the present paper concerns this mathematically and physically motivated stress-diffusive creeping-flow Oldroyd-B system.
         
 Let $\Omega\subset\mathbb{R}^2$ be a smooth bounded domain.
         Following the thermodynamically consistent stress-diffusion framework discussed in \cite{Malek_etal_2018}, we supplement system \eqref{EQ-all} with the following initial and boundary conditions. The initial condition is given by
		\begin{align}\label{initial-con}
			 \tau(x,y,0) = \tau_{0}(x,y),
		\end{align}		
		for $(x,y)\in \Omega$; and  the boundary conditions are  
		\begin{align}\label{boundary-con}
			u = 0,\quad \partial_n\tau = 0,
		\end{align} 
  for $(x,y)\in \partial \Omega$ and $t\in\mathbb{R}_+$.  

   To the best of the authors' knowledge, the first breakdown criterion for the
non-diffusive Oldroyd--B system in the creeping-flow regime was established by
Kupferman, Mangoubi, and Titi \cite{Kupferman_2007}.  For the three-dimensional
Cauchy problem, they proved local well-posedness in $H^m$, $m>5/2$, and showed
that a finite maximal existence time $T^*$ necessarily satisfies
\begin{equation*}
 \lim_{t\uparrow T^*}\int_0^t\|\tau(s)\|_{L^\infty}\,{\rm d}s=\infty.
\end{equation*}
Moreover, as shown in \cite{Elgindi_Masmoudi_2020}, the two-dimensional zero diffusion Oldroyd-B model is   mildly $L^\infty$ 
ill-posed (see Theorem 9.1 in \cite{Elgindi_Masmoudi_2020}). These observations motivate the present focus on the stress-diffusive system with fixed $\varepsilon>0$: under the natural assumption that the initial conformation tensor is nonnegative definite, one can ask whether the regularized creeping-flow model admits a large-data strong solution theory. This is the problem addressed here.\\[0.5mm]
		
Next, we review the research progress concerning Oldroyd-B type models. For the {\bfseries non-diffusive} case ($\varepsilon=0$ in \eqref{Oldroyd_B}), there is an extensive body of literature regarding the global well-posedness of small strong solutions, blow-up criteria, and the long-time behavior of solutions in various domains of $\mathbb{R}^2$ and $\mathbb{R}^3$, see, for instance, \cite{Chemin_2001,Fang_Hieber_Zi_2013,Fernandez_Guillen_Ortega_1998,Guillop_1990,Hieber_Wen_Zi_2019,Huang_2022,Lei_Zhou_2005,Lei_Masmoudi_Zhou_2010,Sun_Zhang_2011,Zi_Fang_Zhang_2014} and the references therein. Notably, the only existing result concerning global-in-time large solutions was established by Lions and Masmoudi \cite{Lions_Masmoudi_2000}, who proved the global existence of two- and three-dimensional weak solutions for the corotational Oldroyd-B model, although the uniqueness of such solutions remains an open problem. For a more comprehensive survey, we refer the reader to the lectures by Saut \cite{Saut_2013}.

Regarding the {\bfseries diffusive} Oldroyd-B model ($\varepsilon>0$ in \eqref{Oldroyd_B}), research has advanced significantly since the pioneering work of Barrett and Boyaval \cite{Barrett_Boyaval_2011} on the global existence of weak solutions. In \cite{Constantin_2012}, Constantin and Kliegl established the global existence and uniqueness of strong solutions for the two-dimensional Cauchy problem, provided that the initial conformation tensor $\tau_0+\alpha\mathbb{I}$ is non-negative. This result was later generalized by Nong et al. \cite{Nong_2025} to initial-boundary value problems in 2D strip domains. For the inviscid case ($\alpha = 1$),  Elgindi and Rousset \cite{Elgindi_Rousset_2015} obtained the global well-posedness of strong  solution for the 2D Cauchy problem under the assumption of sufficiently small initial data, a result subsequently extended to the 3D case by Elgindi and Liu \cite{Elgindi_Liu_2015}. More recently, Wang and Wen \cite{Wang_Wen_2024} investigated the vanishing diffusion limits for 2D initial-boundary value problems featuring boundary layers.
 
 Compared with these works, the present paper focuses on the zero Reynolds number system and on the initial-boundary value problem in a general bounded smooth domain. The absence of the velocity time derivative changes the structure of the estimates: the velocity is recovered from the stress through a Stokes problem at each time, while the polymeric stress still contains the nonlinear stretching terms $\nabla u\tau+\tau\nabla^\top u$. The main issues are therefore to prove that the conformation tensor remains nonnegative definite under Neumann stress boundary conditions, to exploit the Stokes elliptic gain without imposing extra compatibility conditions on the initial velocity, and to close large-data estimates starting only from $H^1$ regularity of the initial stress.

 The main contributions of this paper can be summarized as follows.
 \begin{enumerate}[(i)]
    \item We establish local strong well-posedness for the elliptic-parabolic initial-boundary value problem \eqref{EQ-all}-\eqref{boundary-con} with merely $H^1$ initial stress. 
    \item  Under the natural assumption that the initial conformation tensor $\tau_0+\alpha\mathbb{I}$ is positive semidefinite, we prove that the conformation tensor remains nonnegative definite and use this property to derive a trace-based $L^1$ estimate. 
    \item  Combining the $L^1$ control, the stress diffusion, and the Stokes elliptic regularity, we obtain global-in-time strong solutions of \eqref{EQ-all}-\eqref{boundary-con}  for arbitrarily large $H^1$ initial data in two-dimensional bounded smooth domains. 
    \item  We identify a regularity hierarchy specific to the creeping-flow regime: the velocity field is smoother than the polymeric stress tensor because it is determined by an elliptic Stokes problem. 
\end{enumerate}
\subsection{Notations}
		In this section, we  introduce some notations that will be repeatedly used throughout the paper.
		\begin{enumerate}[(1)] 
			\item  $C$ denotes a generic positive constant which may depend on $\varepsilon,\alpha,\gamma$ and $\Omega$, but is independent of temporal variable  $t$. And, $A\lesssim B\Leftrightarrow A\leq CB$.
			\item For vector function $u$ and matrix-valued function $\tau$, we use the following notations:
			$$ (\nabla u)_{ij}=\partial_{j}u_{i},\; (\nabla^\top u)_{ij}=\partial_{i}u_{j},\;  (\nabla\tau)_{nml}=\partial_{l}\tau_{nm}.$$
           \item  The summation convention is adopted: whenever an index appears twice in a single term,  summation over that index is implied. 
             \item $L^p$ and $H^s$ denote the usual Lebesgue and Sobolev spaces over $\Omega\subset\mathbb{R}^2$. Let $X$ be a Banach space. For given $T>0$, we use the notation that
			 \begin{equation*}
				\|u\|_{L_T^p X}:= 
				\begin{cases} \displaystyle\left(\int_{0}^{T}  \|u(t)\|_X^p  {\rm d}t\right)^{\frac{1}{p}},& 1\leq p<\infty.\\
					\displaystyle\esssup_{t\in[0,T]}\|u(t)\|_X  ,& p=\infty.
				\end{cases}
			\end{equation*}  
		\end{enumerate}
\subsection{Main results} 
The first result of the present paper is the local well-posedness of strong solutions for problem \eqref{EQ-all}-\eqref{boundary-con}.
\begin{prop}\label{Local_Existence}
Let $\Omega\subset\mathbb{R}^2$ be a bounded connected domain of class
$C^{2,1}$, let $0<\alpha<1$ and $\varepsilon,\gamma>0$, and assume that
$\tau_0\in H^1(\Omega)$.  Then there exists a time
$T_*>0$, depending only on $\Omega$, the parameters, and
$\|\tau_0\|_{H^1}$, such that \eqref{EQ-all}--\eqref{boundary-con} has a
unique strong solution $(u,\tau)$ on $[0,T_*]$ satisfying
\begin{align*}
&u\in C([0,T_*];H^2(\Omega))\cap L^2(0,T_*;H^3(\Omega)),
 \qquad u_t\in L^2(0,T_*;H^1(\Omega)),\\ 
&\tau\in C([0,T_*];H^1(\Omega))\cap L^2(0,T_*;H^2(\Omega)),
 \qquad \tau_t\in L^2(0,T_*;L^2(\Omega)).
\end{align*}
\end{prop}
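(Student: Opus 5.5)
We prove Proposition~\ref{Local_Existence} by a contraction (or Schauder) argument in which the velocity is slaved to the stress through the stationary Stokes problem. For a given stress field $\bar\tau$ we define $u=\mathcal{S}(\bar\tau)$ as the solution of $-(1-\alpha)\Delta u+\nabla p=\operatorname{div}\bar\tau$, $\operatorname{div}u=0$, $u|_{\partial\Omega}=0$. The Cattabriga--Solonnikov estimates for the Stokes system on a $C^{2,1}$ domain give
\begin{equation*}
\|u\|_{H^{k+1}}\lesssim\|\bar\tau\|_{H^{k}},\qquad k=0,1,2 .
\end{equation*}
This yields $u\in C([0,T];H^2)\cap L^2(0,T;H^3)$ as soon as $\bar\tau\in C([0,T];H^1)\cap L^2(0,T;H^2)$. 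Since the Stokes problem is linear and time-independent, differentiating in $t$ also gives $\|u_t\|_{H^1}\lesssim\|\bar\tau_t\|_{L^2}$, which is the claimed regularity of $u_t$. We then solve the linear Neumann parabolic problem
\begin{equation*}
\partial_t\tau-\varepsilon\Delta\tau+\gamma\tau=F(\bar\tau):=-(u\cdot\nabla)\bar\tau+\nabla u\,\bar\tau+\bar\tau\nabla^\top u+2\alpha\mathbb{D}(u),\quad \partial_n\tau|_{\partial\Omega}=0,\quad \tau(0)=\tau_0 .
\end{equation*}
Maximal $L^2$-regularity for the Neumann heat equation, which needs no compatibility condition at the $H^1$ level, gives
\begin{equation*}
\|\tau\|_{L^\infty_TH^1}+\|\tau\|_{L^2_TH^2}+\|\tau_t\|_{L^2_TL^2}\lesssim\|\tau_0\|_{H^1}+\|F\|_{L^2_TL^2} .
\end{equation*}

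The key step is to bound $\|F\|_{L^2_TL^2}$ with a small power of $T$, which requires only two-dimensional Ladyzhenskaya and Agmon-type inequalities:
\begin{align*}
\|(u\cdot\nabla)\bar\tau\|_{L^2}&\le\|u\|_{L^\infty}\|\nabla\bar\tau\|_{L^2}\lesssim\|\bar\tau\|_{H^1}^2,\\
\|\nabla u\,\bar\tau\|_{L^2}&\le\|\nabla u\|_{L^4}\|\bar\tau\|_{L^4}\lesssim\|\bar\tau\|_{H^1}^2 .
\end{align*}
Hence $\|F\|_{L^2_TL^2}\lesssim T^{1/2}\big(\|\bar\tau\|_{L^\infty_TH^1}^2+\|\bar\tau\|_{L^\infty_TH^1}\big)$. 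The Stokes gain matters here: $\nabla u$ is controlled in $H^1$ by $\bar\tau\in H^1$, so the stretching terms are of lower order. Consequently the map $\Phi:\bar\tau\mapsto\tau$ sends the ball
\begin{equation*}
B_R=\Big\{\|\bar\tau\|_{L^\infty_TH^1}+\|\bar\tau\|_{L^2_TH^2}+\|\bar\tau_t\|_{L^2_TL^2}\le R\Big\},\qquad R=2C\|\tau_0\|_{H^1}+1,
\end{equation*}
into itself for $T_*$ small depending only on $R$, $\Omega$ and the parameters.

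For contraction we use a weaker norm. Take the difference of two images, with $\delta\tau=\tau_1-\tau_2$, $\delta\bar\tau=\bar\tau_1-\bar\tau_2$ and $\delta u=\mathcal{S}(\delta\bar\tau)$. The energy estimate in $L^\infty_TL^2\cap L^2_TH^1$ involves terms such as $(\delta u\cdot\nabla)\bar\tau_1$ and $\nabla\delta u\,\bar\tau_1$, paired with $\delta\tau$. We bound these by $\|\delta u\|_{H^1}\|\nabla\bar\tau_1\|_{L^2}\|\delta\tau\|_{L^4}$ and related products, using $\|\delta u\|_{H^1}\lesssim\|\delta\bar\tau\|_{L^2}$. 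In the transport term $(u_2\cdot\nabla)\delta\bar\tau$ the derivative falls on $\delta\bar\tau$, so it is bounded by $\|u_2\|_{L^\infty}\|\nabla\delta\bar\tau\|_{L^2}\|\delta\tau\|_{L^2}$. This is acceptable because the contraction norm includes $L^2_TH^1$. One then obtains a contraction factor $CT^{1/4}R$ in $L^\infty_TL^2\cap L^2_TH^1$.

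Since $B_R$ is closed under this weaker convergence by weak lower semicontinuity, the Banach fixed point theorem gives a fixed point. Uniqueness in the stated class follows from the same difference estimate combined with Gronwall's inequality. Time continuity $\tau\in C([0,T_*];H^1)$ follows from the Lions--Magenes lemma, using $\tau\in L^2H^2$ and $\tau_t\in L^2L^2$, and $u\in C([0,T_*];H^2)$ then follows from the Stokes estimate. We expect the main obstacle to be the nonlinear estimates at this low regularity, above all the bound on $\nabla u\,\bar\tau$ in the difference estimate. If a direct bound fails, we would fall back on $\|\nabla\delta u\|_{L^4}\lesssim\|\delta\bar\tau\|_{L^2}^{1/2}\|\delta\bar\tau\|_{H^1}^{1/2}$ together with Young's inequality, absorbing into the dissipation. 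A secondary point requiring care is the justification of Neumann maximal regularity on $C^{2,1}$ domains.
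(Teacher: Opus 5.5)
Your proof is correct, but it takes a different route from the paper's.

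\textbf{The paper's route.} It applies Schauder's theorem to pairs $(v,\sigma)\in K_T$ and keeps the transport term on the unknown. That is, it solves $\partial_t\tau+(v\cdot\nabla)\tau+\gamma\tau-\varepsilon\Delta\tau=\nabla v\,\sigma+\sigma\nabla^\top v+\alpha(\nabla v+\nabla^\top v)$ via Lemma~\ref{lem_tau_para}. It must then check continuity and compactness of $\Phi$, the latter via Aubin--Lions--Simon. Uniqueness is proved afterwards by a separate $L^2$ Gronwall argument.

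\textbf{Your route.} You slave $u=\mathcal{S}(\bar\tau)$ to the stress and push the whole nonlinearity, including $(u\cdot\nabla)\bar\tau$, into the forcing of a pure Neumann heat equation. This is where you exploit the creeping-flow structure: $\|u\|_{L^\infty}\lesssim\|u\|_{H^2}\lesssim\|\bar\tau\|_{H^1}$ puts $F$ in $L^\infty_TL^2$, which gives the explicit factor $T^{1/2}$.

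\textbf{What each buys.} Your approach gives a Banach contraction in $L^\infty_TL^2\cap L^2_TH^1$ on a ball that is closed in that weaker topology. You need no compactness, you get existence and uniqueness within the ball at once, and your metric matches your estimates. The term $(u_2\cdot\nabla)\delta\bar\tau$ is covered by the $L^2_TH^1$ part of your norm; alternatively, since $\div u_2=0$ and $u_2|_{\partial\Omega}=0$, you can integrate by parts onto $\delta\tau$. By contrast, the paper's continuity estimate \eqref{tauH1} has $\|\tilde\sigma\|_{L^\infty_TH^1}$ on the right, although $X_T$ only measures $\sigma$ in $C([0,T];L^2)$. Repairing this requires a different H\"older split, e.g.\ $\|\nabla v^2\|_{L^\infty}\|\tilde\sigma\|_{L^2}$. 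Leaving transport in the linear operator, as the paper does, is more robust for a rougher advecting field, but it is not needed here.

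\textbf{Your flagged obstacle.} The step you single out is harmless:
$|\langle\nabla\delta u\,\bar\tau_1,\delta\tau\rangle|\le\|\nabla\delta u\|_{L^2}\|\bar\tau_1\|_{L^4}\|\delta\tau\|_{L^4}\lesssim R\,\|\delta\bar\tau\|_{L^2}\|\delta\tau\|_{H^1}$.
After integration in time this gives a factor $RT^{1/2}$, so no fallback is required. Your contraction constant is in fact $C(R+1)T^{1/2}$.
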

         \begin{remark}
         In \cite{Kupferman_2007}, the authors used the Stokes kernel to reduce the non-diffusive Cauchy problem to a closed equation for $\tau$ and then applied the classical arguments of Majda and Bertozzi (\cite{Majda_Bertozzi_2002}) to prove local well-posedness with initial data in $H^{m}(\mathbb{R}^3)$, $m>5/2$. Following their approach, one can obtain local well-posedness for the two-dimensional Cauchy problem with data in $H^{m}(\mathbb{R}^2)$, $m>2$. In the present work, the boundary value problem \eqref{EQ-all}-\eqref{boundary-con} is treated directly as an elliptic-parabolic coupled system. This allows us to obtain local well-posedness from $H^1$ initial stress data. 
		\end{remark}
Based on Proposition \ref{Local_Existence}, and the uniform estimates derived in Section \ref{Sec_est}, we obtain the following global-in-time well-posedness result for problem \eqref{EQ-all}-\eqref{boundary-con}. 
		\begin{thm}\label{main_thm}
			Let $\Omega \subset \mathbb{R}^2$ be a bounded, connected domain with $C^{2,1}$ boundary. Suppose that the initial data satisfy $\tau_{0} \in H^{1}(\Omega)$ and that the tensor $\tau_{0} + \alpha\mathbb{I}$ is  symmetric and nonnegative definite. Then, for any $T>0$, the system \eqref{EQ-all}-\eqref{boundary-con}  possesses a unique strong solution  
$(u,\tau)$ with the following regularity:
\begin{align*}
				&u\in C([0,T];H^2)\cap L^2(0,T;H^3),
                \qquad u_t\in L^2(0,T;H^1),\\ 
				&\tau\in C([0,T];H^1)\cap L^2(0,T;H^2),
                \qquad\tau_t\in L^2(0,T;L^2).
			\end{align*}
		\end{thm}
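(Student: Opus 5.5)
We argue by continuation from Proposition \ref{Local_Existence}. Since the local existence time $T_*$ depends only on $\|\tau_0\|_{H^1}$ (and on $\Omega$ and the parameters), it suffices to prove an a priori bound: for every $T>0$ there is $C(T)$ with $\sup_{t\le T}\|\tau(t)\|_{H^1}\le C(T)$ for any strong solution on $[0,T]$. Uniqueness on $[0,T]$ then follows from the local uniqueness statement. The a priori estimate is built in three layers.

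\textbf{Step 1: positivity and an $L^1$ bound.} Set $\sigma=\tau+\alpha\mathbb{I}$. Since $\nabla u\,\mathbb{I}+\mathbb{I}\nabla^\top u=2\mathbb{D}(u)$, the equation for $\sigma$ is
\[
\partial_t\sigma+u\cdot\nabla\sigma-\nabla u\,\sigma-\sigma\nabla^\top u+\gamma\sigma-\varepsilon\Delta\sigma=\gamma\alpha\mathbb{I},
\qquad \partial_n\sigma=0 .
\]
We show $\sigma\ge0$ is preserved. Test against $\xi\otimes\xi$ for a fixed $\xi$, or equivalently run a Stampacchia argument on the negative part of the smallest eigenvalue. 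A cleaner route is to work on a regularized problem with smooth data. There one takes the minimal eigenvalue $\lambda(x,t)$ and its eigenvector $\xi$. At a spatial minimum, interior or boundary (the Neumann condition together with the Hopf lemma excludes a boundary minimum with negative $\lambda$ when the domain is smooth), the stretching term contributes $2\lambda\,\xi\cdot\nabla u\,\xi$, which is linear in $\lambda$. The forcing $\gamma\alpha\ge0$ then gives $\lambda\ge0$ by a Gronwall/maximum principle argument, and one passes to the limit.

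Next take the trace. Because $\operatorname{div}u=0$ and $\partial_n\operatorname{tr}\sigma=0$, the transport and diffusion terms integrate to zero, so
\[
\frac{d}{dt}\int\operatorname{tr}\sigma+\gamma\int\operatorname{tr}\sigma=2\int\nabla u:\sigma+2\alpha\gamma|\Omega| .
\]
To handle the stretching term, test the Stokes equation with $u$ to get $(1-\alpha)\|\nabla u\|_2^2=-\int\tau:\nabla u$. Since $\int\mathbb{I}:\nabla u=\int\operatorname{div}u=0$, we have $\int\nabla u:\sigma=\int\nabla u:\tau=-(1-\alpha)\|\nabla u\|_2^2$. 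This yields $\sup_t\|\sigma\|_{L^1}+\int_0^T\|\nabla u\|_2^2\le C(T)$, and because $\sigma\ge0$, $|\tau|\lesssim\operatorname{tr}\sigma+1$, so $\tau\in L^\infty L^1$.

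\textbf{Step 2: $L^2$ estimate, the main obstacle.} Test the $\tau$-equation with $\tau$:
\[
\frac12\frac{d}{dt}\|\tau\|_2^2+\varepsilon\|\nabla\tau\|_2^2+\gamma\|\tau\|_2^2\le 2\int|\nabla u||\tau|^2+C\|\nabla u\|_2\|\tau\|_2 .
\]
Stokes regularity gives $\|\nabla u\|_{L^p}\lesssim\|\tau\|_{L^p}$ for $1<p<\infty$, so the cubic term is bounded by $\|\tau\|_{L^3}^3$. In 2D, Gagliardo--Nirenberg starting from $L^1$ gives $\|\tau\|_3^3\lesssim\|\tau\|_1\|\tau\|_{H^1}^2$, which cannot be absorbed for large $L^1$ mass. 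This is the critical point. What I would try first is to exploit $\sigma\ge0$ more strongly through an entropy-type functional such as $\int(\operatorname{tr}\sigma-\log\det\sigma)$, in the spirit of Constantin--Kliegl. For the Oldroyd-B structure its time derivative produces $-\int\nabla u:(\mathbb{I})$-type terms that cancel against the Stokes energy. Its dissipation $\varepsilon\int|\nabla\sigma|^2/\lambda^2$-type terms would give $\int_0^T\|\nabla\log\sigma\|^2$, and hence control of $\|\tau\|_{L^2_tL^2}$ via $\sigma\log\sigma$ bounds. If that fails, the fallback is to use the $L\log L$ information on $\sigma$ in a Brezis-type improved Gagliardo--Nirenberg (Trudinger-Moser/Orlicz duality): $\int|\nabla u||\tau|^2\le\delta\|\nabla\tau\|_2^2+C\|\tau\|_2^2\log(e+\|\tau\|_{H^1})$ with $\|\nabla u\|_{L^\infty}$ logarithmic loss. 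Then close with a log-Gronwall inequality, giving $\tau\in L^\infty L^2\cap L^2H^1$.

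\textbf{Step 3: $H^1$ and higher norms.} With $\tau\in L^\infty_TL^2\cap L^2_TH^1$, we have $u\in L^2_TH^2\subset L^2_TW^{1,p}$ for all $p<\infty$. Test the equation with $-\Delta\tau$; the nonlinearities $u\cdot\nabla\tau$ and $\nabla u\,\tau$ are bounded by Ladyzhenskaya and Stokes $H^2$ estimates, giving
\[
\frac{d}{dt}\|\nabla\tau\|_2^2+\varepsilon\|\Delta\tau\|_2^2\le C\big(1+\|\tau\|_{H^1}^2\big)\|\nabla\tau\|_2^2 .
\]
Gronwall then bounds $\tau\in L^\infty H^1\cap L^2H^2$. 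The remaining regularity comes directly: $\tau_t\in L^2L^2$ from the equation, $u\in C H^2\cap L^2H^3$ from Stokes regularity, and $u_t\in L^2H^1$ by differentiating the Stokes system in time. This uniform $H^1$ bound lets Proposition \ref{Local_Existence} be iterated up to any $T$.
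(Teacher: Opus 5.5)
\textbf{Verdict: there is a genuine gap, in Step 2.} Your Steps 1 and 3 and the continuation argument match the paper. Step 1 uses the trace identity plus non-negativity to get $\sup_t\|\tau\|_{L^1}+\int_0^T\|\nabla u\|_{L^2}^2\,{\rm d}t\le C(T)$. Step 3 is the $-\Delta\tau$ estimate, and then Proposition~\ref{Local_Existence} is iterated. Step 2, however, is not a proof. It lists candidate strategies: an entropy functional whose dissipation would somehow yield $\|\tau\|_{L^2_tL^2}$, or a Brezis-type logarithmic inequality closed by log-Gronwall. None of these is carried out or checked, yet the $L^2$ bound is exactly what Step 3 needs. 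The obstruction you identify comes from your choice of estimate. By bounding $\|\nabla u\|_{L^3}$ by $\|\tau\|_{L^3}$ through Stokes $L^p$ theory, you discard what Step 1 already gave you: $\nabla u\in L^2(0,T;L^2)$, with a bound depending only on $T$ and $\|\tau_0\|_{L^1}$.

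\textbf{The missing step.} Put $\nabla u$ in $L^2$ and $\tau$ in $L^4$. By H\"older and Ladyzhenskaya,
\begin{equation*}
2\int|\nabla u||\tau|^2\le 2\|\nabla u\|_{L^2}\|\tau\|_{L^4}^2\le C\|\nabla u\|_{L^2}\|\tau\|_{L^2}\bigl(\|\nabla\tau\|_{L^2}+\|\tau\|_{L^2}\bigr)\le\frac{\varepsilon}{2}\|\nabla\tau\|_{L^2}^2+C\bigl(\|\nabla u\|_{L^2}+\|\nabla u\|_{L^2}^2\bigr)\|\tau\|_{L^2}^2 .
\end{equation*}
The linear term is at most $\frac{\gamma}{2}\|\tau\|_{L^2}^2+C\|\nabla u\|_{L^2}^2$. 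Step 1 gives $\int_0^T\bigl(\|\nabla u\|_{L^2}+\|\nabla u\|_{L^2}^2\bigr)\,{\rm d}t\le C(T)$, so Gronwall's inequality yields $\tau\in L^\infty(0,T;L^2)\cap L^2(0,T;H^1)$ directly. This is how the paper proves Lemma~\ref{thm4}. The non-negativity of $\tau+\alpha\mathbb{I}$ enters only through the trace identity, whose stretching contribution has the favourable sign $-2(1-\alpha)\|\nabla u\|_{L^2}^2$. No entropy functional or logarithmic inequality is needed, and the $L^3$ criticality you worry about never arises.
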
   
        \begin{remark}
            Due to the absence of inertia terms in the velocity equation,  $H^1$ regularity of the initial data is sufficient to guarantee global well-posedness for \eqref{EQ-all}-\eqref{boundary-con}. This contrasts with the results for the full Oldroyd-B model \eqref{Oldroyd_B} (see, e.g., \cite{Constantin_2012,Nong_2025} and references therein). Moreover, for the same reason, the velocity field enjoys higher regularity than the polymeric stress tensor in problem \eqref{EQ-all}-\eqref{boundary-con}, which is again different from the situation for system \eqref{Oldroyd_B}.
        \end{remark} 
         \begin{remark}
             Compared with \cite{Constantin_2012}, which treats the two-dimensional Cauchy problem for a diffusive Oldroyd-B system, and with \cite{Nong_2025}, which treats an initial-boundary value problem in a strip domain, Theorem \ref{main_thm} addresses a zero-Reynolds-number elliptic-parabolic system on general two-dimensional bounded smooth domains. The proof relies on a trace estimate for the conformation tensor, the preservation of its non-negativity, and Stokes regularity in bounded domains. 
         \end{remark}
		The rest of the paper is organized as follows.
		\begin{itemize}
\item In Section \ref{Sec_Prel}, we collect some preliminary results.
\item Section \ref{Sec_Local} is devoted to proving the existence and uniqueness of local strong solutions to system \eqref{EQ-all}-\eqref{boundary-con}.
\item In Section \ref{Sec_est}, we establish uniform {\it a priori} estimates for the solution to system \eqref{EQ-all}-\eqref{boundary-con} and complete the proof of Theorem \ref{main_thm} via the continuity argument.
\end{itemize}
		
\section{Preliminaries} \label{Sec_Prel}
We start this section by recalling the  regularity estimate for   the following Stokes problem
\begin{equation}\label{EQ-t22h}
\begin{cases}
-\Delta u+\nabla p = f, &\text{ in }\Omega,\\
\div u =0, &\text{ in }\Omega,\\
u=0, &\text{ on }\partial\Omega.
\end{cases}
\end{equation} 
\begin{lem}[\cite{Boyer_Fabrie_2013}]\label{stoke}
Let $\Omega$ be a bounded connected domain of class $C^{k+1,1}$ in $\mathbb{R}^2$, with $k\geq 0$. Assume that $f \in H^{k}(\Omega)$. Then, there exists a  
  unique solution $(u, p)\in  H^{k+2}(\Omega) \times H^{k+1}(\Omega)$  of  problem \eqref{EQ-t22h}, after imposing
  $\int_\Omega p\,{\rm d}x=0$,  satisfying  
\begin{equation*} 
\|u\|_{H^{k+2}} + \|p\|_{H^{k+1}} \leq C \|f\|_{H^k}.
\end{equation*} 
Moreover, if $f=\mathrm{div}F$ with $F\in H^1$, then the solution $u$ of \eqref{EQ-t22h} satisfies 
\begin{equation*} 
\|u\|_{H^{1}}  \leq C \|F\|_{L^2}.
\end{equation*} 
\end{lem}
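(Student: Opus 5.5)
This is a classical result cited from \cite{Boyer_Fabrie_2013}, so the plan is to reconstruct its standard proof in three steps: weak solvability, the higher-order elliptic estimate, and the negative-norm estimate for $f=\div F$.

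\emph{Step 1: weak solvability.} Let $V=\{v\in H^1_0(\Omega)^2:\div v=0\}$. The form $\int_\Omega\nabla u:\nabla v$ is coercive on $V$ by Poincaré's inequality, so Lax--Milgram gives a unique $u\in V$ with $\int\nabla u:\nabla v=\langle f,v\rangle$ for all $v\in V$. The pressure then comes from de Rham's theorem together with the Nečas inequality $\|p\|_{L^2/\mathbb{R}}\le C\|\nabla p\|_{H^{-1}}$, which holds on bounded Lipschitz connected domains through the Bogovskii right inverse of $\div$. This yields $p\in L^2$, unique once we impose $\int_\Omega p=0$.

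\emph{Step 2: higher regularity.} For the bound $\|u\|_{H^{k+2}}+\|p\|_{H^{k+1}}\le C\|f\|_{H^k}$ I would localize with a partition of unity. Near the boundary, flatten $\partial\Omega$ by a $C^{k+1,1}$ diffeomorphism; this produces a Stokes-type system with variable coefficients and a perturbed divergence constraint. Tangential difference quotients then give $L^2$ control of tangential derivatives of $\nabla u$ and $p$, and the equations themselves recover the normal derivatives. For the pressure I would use $\partial_n p$ from the momentum equation, together with $\partial_n^2 u_n$ from differentiating the divergence constraint. Induction on $k$ then gives the full estimate. Equivalently, one can invoke the Agmon--Douglis--Nirenberg theory for the Stokes system, which satisfies the complementing (Lopatinskii--Shapiro) condition under Dirichlet data.

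\emph{Step 3: the estimate for $f=\div F$.} Test the weak formulation with $v=u$ itself:
\[
\|\nabla u\|_{L^2}^2=\langle \div F,u\rangle=-\int_\Omega F:\nabla u\,{\rm d}x\le \|F\|_{L^2}\|\nabla u\|_{L^2}.
\]
The pressure term drops out because $\div u=0$, and there is no boundary term because $u=0$ on $\partial\Omega$. Poincaré's inequality then gives $\|u\|_{H^1}\le C\|F\|_{L^2}$. For $F\in H^1$ we have $f\in L^2$, so the solution from Step 2 coincides with the weak one and the bound applies to it.

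The main obstacle is the boundary regularity in Step 2, specifically recovering the pressure's normal derivative. This is where the coupling through the divergence constraint has to be exploited, and where the $C^{k+1,1}$ regularity of the boundary is used.
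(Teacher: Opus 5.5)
Your proposal is correct. The paper gives no proof of this lemma and simply cites Boyer--Fabrie, and your outline is the standard argument behind that reference: Lax--Milgram on divergence-free fields with de Rham/Ne\v{c}as for the pressure, then localization with tangential difference quotients, recovering normal derivatives from the divergence constraint and the momentum equation, inducted on $k$ for $C^{k+1,1}$ domains. Your energy test with $u$ for the case $f=\mathrm{div}\,F$, combined with Poincar\'e's inequality, correctly justifies the final $H^1$ bound.
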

Next, we consider the following Neumann problem of the linear transport diffusion equations.
\begin{equation}\label{EQ-t2h}
\begin{cases}
\partial_t \tau + v \cdot \nabla \tau- \varepsilon \Delta \tau + \gamma \tau =  g, &\text{in } \Omega\times(0,T),\\
\partial_n \tau =  0,  &\text{on } \partial \Omega\times(0,T),\\
\tau(x,y,0) = \tau_0, & \text{on }  \Omega,
\end{cases}
\end{equation} 
where $T>0$ is a given constant.
For problem \eqref{EQ-t2h}, we have the following results.
\begin{lem}\label{lem_tau_para}
Let $\Omega \subset \mathbb{R}^2$ be a bounded and connected domain of class $C^{2,1}$. Suppose that $\tau_0 \in H^1(\Omega)$, $v \in L^{\infty}([0,T]; H^1_0(\Omega))$ with $\mathrm{div}v=0$, and $g \in L^{2}([0,T]; L^{2}(\Omega))$. Then \eqref{EQ-t2h} has a unique solution
\[
 \tau\in C([0,T];H^1)\cap L^2(0,T;H^2),
 \qquad \tau_t\in L^2(0,T;L^2).
\]
Moreover,
\begin{align}\label{tau}
&\|\tau\|_{L_T^\infty H^1}^2+
  \|\tau\|_{L_T^2H^2}^2+
  \|\tau_t\|_{L_T^2L^2}^2 \nonumber\\
&\quad\le C\bigl(\|\tau_0\|_{H^1}^2+
                 \|g\|_{L_T^2L^2}^2\bigr)
 \exp\!\left\{C\left(T+\|v\|_{L_T^2L^4}^2+
                         \|v\|_{L_T^4L^4}^4\right)\right\},
\end{align}
where $C$ depends only on $\Omega$, $\varepsilon$, and $\gamma$.
\end{lem}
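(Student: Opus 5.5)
We argue by the Galerkin method combined with a priori energy estimates at the $H^1$ level. Since the equation is linear in $\tau$, we work with each component of $\tau$ separately, so $\tau$ may be regarded as scalar. We approximate with the eigenfunctions of the Neumann Laplacian $-\Delta+I$ on $\Omega$. These form an orthogonal basis of both $L^2$ and $H^1$, and they satisfy $\partial_n w_k=0$, so testing with $\Delta\tau_m$ is legitimate at the Galerkin level. The finite-dimensional ODE system is linear with coefficients in $L^\infty(0,T)$, because $v\in L^\infty H^1_0$ gives $\int (v\cdot\nabla w_j)w_k$ bounded. It therefore has a global solution on $[0,T]$, and it only remains to derive bounds uniform in $m$.

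\textbf{Step 1: $L^2$ estimate.} Test the equation with $\tau$. Since $\mathrm{div}\,v=0$ and $v|_{\partial\Omega}=0$, we have $\int v\cdot\nabla\tau\,\tau=0$. This yields
\[
\tfrac12\tfrac{d}{dt}\|\tau\|_{L^2}^2+\varepsilon\|\nabla\tau\|_{L^2}^2+\gamma\|\tau\|_{L^2}^2\le \|g\|_{L^2}\|\tau\|_{L^2},
\]
and Gronwall gives $\|\tau\|_{L^\infty_TL^2}^2+\|\nabla\tau\|_{L^2_TL^2}^2\le C(\|\tau_0\|_{L^2}^2+\|g\|_{L^2_TL^2}^2)e^{CT}$.

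\textbf{Step 2: $H^1$ estimate (the main step).} Test with $-\Delta\tau$ and integrate by parts. The boundary term in the time derivative vanishes because $\partial_n\tau=0$, so
\[
\tfrac12\tfrac{d}{dt}\|\nabla\tau\|_{L^2}^2+\varepsilon\|\Delta\tau\|_{L^2}^2+\gamma\|\nabla\tau\|_{L^2}^2=\int (v\cdot\nabla\tau)\Delta\tau-\int g\Delta\tau.
\]
The convection term is the obstacle. We bound it by $\|v\|_{L^4}\|\nabla\tau\|_{L^4}\|\Delta\tau\|_{L^2}$ and use the two-dimensional Ladyzhenskaya/Gagliardo--Nirenberg inequality
\[
\|\nabla\tau\|_{L^4}\lesssim \|\nabla\tau\|_{L^2}^{1/2}\|\tau\|_{H^2}^{1/2}+\|\nabla\tau\|_{L^2}.
\]
This is combined with the Neumann elliptic estimate $\|\tau\|_{H^2}\le C(\|\Delta\tau\|_{L^2}+\|\tau\|_{L^2})$, valid for $C^{1,1}$ (hence $C^{2,1}$) domains. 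Young's inequality then absorbs the top-order terms into $\frac{\varepsilon}{2}\|\Delta\tau\|^2$, leaving
\[
C\bigl(\|v\|_{L^4}^4+\|v\|_{L^4}^2\bigr)\bigl(\|\nabla\tau\|_{L^2}^2+\|\tau\|_{L^2}^2\bigr)+C\|g\|_{L^2}^2 .
\]
Gronwall applied to $\|\tau\|_{H^1}^2$, together with Step 1, produces precisely the exponential factor $\exp\{C(T+\|v\|_{L^2_TL^4}^2+\|v\|_{L^4_TL^4}^4)\}$ appearing in \eqref{tau}. This also controls $\|\tau\|_{L^2_TH^2}$ via the elliptic estimate.

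\textbf{Step 3: time derivative, passage to the limit, uniqueness.} The equation itself gives $\|\tau_t\|_{L^2}\le \|v\|_{L^4}\|\nabla\tau\|_{L^4}+\varepsilon\|\Delta\tau\|+\gamma\|\tau\|+\|g\|$. Estimating $\|\nabla\tau\|_{L^4}$ as in Step 2 and integrating in time bounds $\|\tau_t\|_{L^2_TL^2}^2$ by the same right-hand side. The uniform bounds allow weak(-$*$) extraction, and Aubin--Lions provides strong compactness to pass to the limit in the bilinear term $v\cdot\nabla\tau_m$. The Neumann condition is inherited from the limit in the weak formulation. Continuity $\tau\in C([0,T];H^1)$ follows from the Lions--Magenes lemma, since $\tau\in L^2H^2$ and $\tau_t\in L^2L^2$. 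Uniqueness follows by applying Step 1 to the difference of two solutions, which solves the problem with zero data. Lower semicontinuity of norms transfers the estimate \eqref{tau} to the limit.
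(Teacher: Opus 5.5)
Your proposal is correct and follows the paper's route. Both arguments use a Galerkin scheme, an $L^2$ estimate in which the convection term vanishes because $\mathrm{div}\,v=0$ and $v|_{\partial\Omega}=0$, and an $H^1$ estimate obtained by testing with $-\Delta\tau$, with the convection term handled by Ladyzhenskaya, Young, and Gronwall. There are two minor differences. The paper tests with $\partial_t\tau-\Delta\tau$, so the bound on $\|\tau_t\|_{L^2_TL^2}$ comes out simultaneously, whereas you read it off the equation afterwards. You also state explicitly the Neumann elliptic estimate $\|\tau\|_{H^2}\lesssim\|\Delta\tau\|_{L^2}+\|\tau\|_{L^2}$, which the paper uses only tacitly when it writes $\|\nabla^2\tau\|_{L^2}$ in place of $\|\Delta\tau\|_{L^2}$.
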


\begin{proof} 	
 The existence and uniqueness of the solutions to problem \eqref{EQ-t2h} can be established via the standard Galerkin approximation method (refer to Chapter 7 of \cite{Evans_2010} for details). Here, we only prove regularity estimates \eqref{tau}. Firstly, multiplying $\eqref{EQ-t2h}_1$ by $\tau$ and integrating the result over $\Omega$ by parts, we have
\begin{align}\label{ener-eq-tau}
 \frac{1}{2}\frac{{\rm d}}{{\rm dt}}\|\tau\|_{L^2}^2+\varepsilon\|\nabla\tau\|_{L^2}^2+\gamma\|\tau\|_{L^2}^2
 &=\int g :\tau {\rm d}x{\rm d}y \notag \\ 
 &\leq \|g\|_{L^2}\|\tau\|_{L^2}\leq\frac{1}{2\gamma}\|g\|_{L^2}^2
				+\frac{\gamma}{2}\|\tau\|_{L^2}^2,
 \end{align}
which implies that
\begin{align}\label{EQ-t4h}
&\frac{1}{2}\frac{{\rm d}}{{\rm dt}}\|\tau\|_{L^2}^2+\varepsilon\|\nabla\tau\|_{L^2}^2			+\frac{\gamma}{2}\|\tau\|_{L^2}^2\leq\frac{1}{2\gamma}\|g\|_{L^2}^2.
\end{align}
Integrating \eqref{EQ-t4h} over $[0,t]$ with $0 < t < T$, we obtain
\begin{align}\label{2.6}			&\|\tau(t)\|_{L^2}^2+\gamma\|\tau\|_{L_t^2L^2}^2+2\varepsilon\| \nabla\tau\|_{L_t^2L^2}^2\leq
\|\tau_0\|_{L^2}^2+\frac{1}{\gamma}\left\|g \right\|_{L_t^2L^2}^2.
\end{align}		
Next, multiplying  $\eqref{EQ-t2h}_1$ by $\partial_t\tau-\Delta\tau$ and integrating the results over $\Omega$ by parts, we have
\begin{align}\label{nab-tau-L2-est}
&\frac{1}{2}\frac{{\rm d}}{{\rm dt}} \left[ (\varepsilon + 1)\|\nabla \tau\|^2_{L^2} + \gamma \|\tau\|^2_{L^2} \right] + \varepsilon  \|\nabla^2 \tau\|^2_{L^2} +\|\partial_t\tau\|^2_{L^2} + \gamma\|\nabla\tau\|^2_{L^2}\nonumber\\
&= \int g:(\partial_t \tau-\Delta\tau)- (v\cdot\nabla)\tau:\partial_t \tau+(v\cdot\nabla)\tau: \Delta\tau {\rm d}x{\rm d}y=E_1+E_2+E_3.
\end{align}
Together with H\"{o}lder's  inequality, Young's  inequality and Ladyzhenskaya inequality, we can estimate $E_1$-$E_3$ as follows:
\begin{align}
 |E_1| &\leq \frac{1}{4} \|\partial_t\tau\|_{L^2}^2+\frac{\varepsilon}{6} \|\nabla^2\tau\|_{L^2}^2+C_\varepsilon\|g\|_{L^2}^2.\label{EQ-t6h}\\
|E_2|&\lesssim\|\partial_t \tau\|_{L^2} \|v\|_{L^4} (\|\nabla \tau\|_{L^2}^{\frac{1}{2}} \|\nabla^2 \tau\|_{L^2}^{\frac{1}{2}} + \|\nabla \tau\|_{L^2} )\nonumber\\
&\leq \frac{1}{4}\|\partial_t \tau\|_{L^2}^2 + \frac{\varepsilon}{6}\|\nabla^2\tau\|_{L^2}^2+C_\varepsilon(\|v\|_{L^4}^2+\|v\|_{L^4}^4) \|\nabla \tau\|_{L^2}^2.\label{EQ-t7h}\\
|E_3|&\lesssim \|v\|_{L^4}\|\Delta\tau\|_{L^2} (\|\nabla \tau\|_{L^2}^{\frac{1}{2}} \|\nabla^2 \tau\|_{L^2}^{\frac{1}{2}} + \|\nabla \tau\|_{L^2} )\nonumber\\ 
&\leq \frac{\varepsilon}{6}\|\nabla^2\tau\|_{L^2}^2+C_\varepsilon(\|v\|_{L^4}^2+\|v\|_{L^4}^4) \|\nabla \tau\|_{L^2}^2.
\end{align}
Substituting the estimates for $E_1$-$E_3$ into \eqref{nab-tau-L2-est}, we obtain
\begin{align}
&\frac{\mathrm{d}}{\mathrm{d}t}\left [(\varepsilon + 1)\|\nabla \tau\|_{ L^{ 2 }}^{ 2 }+\gamma\|\tau\|_{L^{ 2 }}^{2} \right]+\varepsilon \|\nabla^2\tau\|_{L^{2}}^{2}+\|\partial_{ t }\tau\|_{L^{2}}^{2} +\gamma\|\nabla\tau\|_{L^{2}}^{2}\nonumber \\ 
&\leq C_\varepsilon\|g\|_{L^2}^2 +C_\varepsilon(\|v\|_{L^4}^2+\|v\|_{L^4}^4) \|\nabla \tau\|_{L^2}^2.\label{4th}
\end{align} 
Applying Gronwall inequality to the above inequality,  we deduce that
\begin{align}\label{EQ-thtau}
&\| \nabla \tau(t) \|_{L^2}^2 +\|\nabla^2\tau\|_{L^2_tL^{2}}^{2}+\|\partial_{ t }\tau\|_{L^2_tL^{2}}^{2} \nonumber\\  
&\leq C_\varepsilon \left(\|\tau_0 \|_{H^1}^2 +  \| g \|_{L^2_tL^2}^2  \right)\exp \left(  C_\varepsilon\int_0^t (\|v\|_{L^4}^2+\|v\|_{L^4}^4) {\rm d}s \right).
\end{align}
Summing \eqref{2.6} and \eqref{EQ-thtau} up, one can complete the proof of \eqref{tau}. 
\end{proof} 
Inspired by the works \cite{Constantin_2012} and \cite{Nong_2025}, to establish the global-in-time well-posedness of problem \eqref{EQ-all}, we need the nonnegativity of $\tau + \alpha \mathbb{I}$. 
Define $\mathbb{T} = \tau + \alpha \mathbb{I}$, after some directly calculation, one can find that problem \eqref{EQ-all}-\eqref{boundary-con} is equivalent to the following problem 
\begin{equation}\label{EQ-t19h}
\begin{cases}
-(1-\alpha) \Delta u + \nabla p = \operatorname{div} \mathbb{T},&  \text{in } \Omega \times (0, T), \\
\mathbb{T}_t + (u \cdot \nabla) \mathbb{T}+ \gamma \mathbb{T} - \varepsilon \Delta \mathbb{T} -(\nabla u\mathbb{T}+\mathbb{T}\nabla^\top u) = \alpha\gamma \mathbb{I}, & \text{in } \Omega \times (0, T), \\
\div u=0, & \text{in } \Omega \times (0, T),\\
u=0, ~~\partial_n \mathbb{T} =0, &\text{on } \partial \Omega\times (0, T),\\
\mathbb{T}(x,y,0) =\mathbb{T}_0 = \tau_0 + \alpha \mathbb{I}, &\text{on } \Omega.
\end{cases}
\end{equation}
No independent initial condition is imposed on $u$, since the velocity is determined at each time by the Stokes equation. 
Then, we have the following Lemma. 
\begin{lem}\label{lem_positivity}
Assume that $\mathbb{T}_0 \in H^1(\Omega)$ is symmetric and nonnegative definite a.e. in $\Omega$, and let $(u,\mathbb{T})$ be a strong solution of problem \eqref{EQ-t19h} with $u\in L^{1}(0,T;W^{1,\infty}(\Omega))$. Then $\mathbb{T}(x,t)$ remains symmetric and nonnegative definite for a.e. $(x,t)\in\Omega\times(0,T)$.
\end{lem}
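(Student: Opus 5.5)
Symmetry and nonnegativity will be handled separately, the latter through a Stampacchia-type truncation applied to the scalar quantities $\xi^\top\mathbb{T}\xi$.

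\textbf{Symmetry.} Set $\mathbb{A}=\mathbb{T}-\mathbb{T}^\top$. Transposing the $\mathbb{T}$-equation in \eqref{EQ-t19h} and subtracting shows that $\mathbb{A}$ solves the linear problem
\begin{equation*}
\mathbb{A}_t+(u\cdot\nabla)\mathbb{A}+\gamma\mathbb{A}-\varepsilon\Delta\mathbb{A}=\nabla u\,\mathbb{A}+\mathbb{A}\nabla^\top u,
\end{equation*}
with $\partial_n\mathbb{A}=0$ on the boundary and $\mathbb{A}(0)=0$. Testing against $\mathbb{A}$ and using $\operatorname{div}u=0$, $u|_{\partial\Omega}=0$ gives
\begin{equation*}
\tfrac{{\rm d}}{{\rm d}t}\|\mathbb{A}\|_{L^2}^2\le 4\|\nabla u\|_{L^\infty}\|\mathbb{A}\|_{L^2}^2.
\end{equation*}
Since $u\in L^1(0,T;W^{1,\infty})$, Gronwall's inequality yields $\mathbb{A}\equiv0$.

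\textbf{Nonnegativity.} Fix a constant unit vector $\xi\in\mathbb{R}^2$. Because $\xi$ is constant, $\phi_\xi:=\xi^\top\mathbb{T}\xi$ satisfies $\partial_n\phi_\xi=0$ and solves the scalar equation
\begin{equation*}
\partial_t\phi_\xi+u\cdot\nabla\phi_\xi+\gamma\phi_\xi-\varepsilon\Delta\phi_\xi=2\,\xi^\top\nabla u\,\mathbb{T}\xi+\alpha\gamma.
\end{equation*}
The stretching term couples $\phi_\xi$ to other directions, so a pointwise estimate for a single $\xi$ does not close. Instead I will work with the negative part integrated over all directions,
\begin{equation*}
\Phi(t)=\int_{S^1}\int_\Omega \big((\phi_\xi)_-\big)^2\,{\rm d}x\,{\rm d}\xi.
\end{equation*}

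I will test the $\phi_\xi$ equation with $-(\phi_\xi)_-$, which lies in $L^2(0,T;H^1)$ since $\mathbb{T}\in L^2H^2$. The contributions are as follows.
\begin{itemize}
\item Diffusion gives $\varepsilon\|\nabla(\phi_\xi)_-\|_{L^2}^2\ge0$; there is no boundary term because of the Neumann condition.
\item Transport vanishes by incompressibility and $u|_{\partial\Omega}=0$.
\item The source $\alpha\gamma>0$ contributes a term of favorable sign.
\end{itemize}

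The main obstacle is the stretching term $-2\int(\phi_\xi)_-\,\xi^\top\nabla u\,\mathbb{T}\xi$, since $\mathbb{T}\xi$ is not controlled by $\phi_\xi$ alone. My first attempt will use the symmetric matrix structure. At each point, write the spectral decomposition $\mathbb{T}=\lambda_1 e_1e_1^\top+\lambda_2e_2e_2^\top$. On the set where $\phi_\xi<0$, the negative eigenpart satisfies $|\mathbb{T}_-|\lesssim \|(\phi_\eta)_-\|_{L^2_\eta(S^1)}$, because
\begin{equation*}
\int_{S^1}\big((\phi_\eta)_-\big)^2{\rm d}\eta\ge c\,|\lambda_-|^2 .
\end{equation*}
The positive eigenpart contributes $\lambda_+(\xi\cdot e_+)\,\xi^\top\nabla u\,e_+$, but where $\phi_\xi<0$ we have $\lambda_+(\xi\cdot e_+)^2\le|\lambda_-|$, and the factor $\xi\cdot e_+$ can be weighed against this; any leftover sublinear factor will be estimated by Young's inequality, which is still of the same quadratic order. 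If this pointwise argument fails to close, I will use a cleaner alternative: follow Constantin--Kliegl and introduce $\mathbb{T}^\delta=\mathbb{T}+\delta e^{-Mt}\mathbb{I}$, argue by contradiction at a first touching time, and pass to the Gronwall form via mollification in time.

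Either route should give
\begin{equation*}
\Phi'(t)\le C\|\nabla u\|_{L^\infty}\,\Phi(t).
\end{equation*}
Since $\Phi(0)=0$ by the nonnegativity of $\mathbb{T}_0$ and $\|\nabla u\|_{L^\infty}\in L^1(0,T)$, Gronwall's inequality gives $\Phi\equiv0$. Hence $\xi^\top\mathbb{T}\xi\ge0$ a.e. for a.e. $\xi$, and by continuity in $\xi$ for all $\xi$, so $\mathbb{T}$ is nonnegative definite a.e.
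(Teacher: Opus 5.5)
Your symmetry argument and your handling of diffusion, transport and the source term are fine. The gap is the stretching term, exactly where you located the difficulty, and the pointwise mechanism you propose there is false.

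Take at a point $\mathbb{T}=\mathrm{diag}(L,-\delta)$ with $L\gg\delta>0$. Then $\phi_\xi<0$ only on two cones around $\pm e_2$ of angular half-width $\approx\sqrt{\delta/L}$, and $(\phi_\xi)_-\le\delta$ there. Hence $\int_{S^1}((\phi_\eta)_-)^2\,{\rm d}\eta\sim\delta^{5/2}L^{-1/2}$, so your lower bound $\int_{S^1}((\phi_\eta)_-)^2\,{\rm d}\eta\ge c|\lambda_-|^2$ fails as $L/\delta\to\infty$. Worse, on these cones $|\mathbb{T}\xi|\approx L|\xi_1|$ reaches $\sqrt{L\delta}$. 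If $\partial_1u_2=1$ is the only nonzero entry of $\nabla u$, then $\xi^\top\nabla u\,\mathbb{T}\xi=L\xi_1\xi_2$ and $\int_{S^1}(\phi_\xi)_-|\xi^\top\nabla u\,\mathbb{T}\xi|\,{\rm d}\xi\sim\delta^2$. This exceeds $\int_{S^1}((\phi_\xi)_-)^2\,{\rm d}\xi$ by the unbounded factor $\sqrt{L/\delta}$.

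So no estimate that bounds the stretching term in absolute value can give $\Phi'\le C\|\nabla u\|_{L^\infty}\Phi$. Weighing $\xi\cdot e_+$ against $\lambda_+(\xi\cdot e_+)^2\le|\lambda_-|$ only yields $(\phi_\xi)_-\sqrt{\lambda_+|\lambda_-|}$. Young's inequality then leaves $\lambda_+|\lambda_-|$, which is not quadratic in the negative part.

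Your fallback is a pointwise maximum-principle argument. It needs $\mathbb{T}$ and $\nabla u$ to be classical (pointwise second derivatives, pointwise values), which a strong solution from $H^1$ data is not. Mollifying in time does not supply this, so as stated it is not a proof.

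What saves your approach is a cancellation that only appears after integrating over directions: the stretching is a transport in $\xi$. Since $\nabla_\xi(\xi^\top\mathbb{T}\xi)=2\mathbb{T}\xi$, write $\xi=(\cos\theta,\sin\theta)$, $\xi^\perp=\partial_\theta\xi$, and use Euler's relation for the $2$-homogeneous function $\xi\mapsto\phi_\xi$:
\[
2\xi^\top\nabla u\,\mathbb{T}\xi=(\nabla^\top u\,\xi)\cdot\nabla_\xi\phi_\xi=2(\xi^\top\nabla u\,\xi)\,\phi_\xi+(\xi^\top\nabla u\,\xi^\perp)\,\partial_\theta\phi_\xi .
\]
Multiply by $-(\phi_\xi)_-$. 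The last term becomes $\frac12(\xi^\top\nabla u\,\xi^\perp)\,\partial_\theta\big((\phi_\xi)_-^2\big)$. Integrating by parts in $\theta$ and using $\mathrm{tr}\,\nabla u=0$, the total stretching contribution to $\frac12\Phi'$ is exactly $3\int_\Omega\int_0^{2\pi}(\xi^\top\nabla u\,\xi)\,(\phi_\xi)_-^2\,{\rm d}\theta\,{\rm d}x\le 3\|\nabla u\|_{L^\infty}\Phi$. Your Gronwall step then closes.

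With this fix your route is genuinely different from the one the paper points to. The paper omits the proof and defers to the argument of Nong et al.\ for nonnegativity of $\det\mathbb{T}$, which is specific to $2\times2$ matrices. Yours is an energy argument valid at the strong-solution level; it needs no maximum principle or smoothing and carries over to any dimension.
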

Based on Lemma \ref{lem_positivity}, we have the following Corollary.
\begin{cor}\label{cor_L1}
Under the assumptions of Theorem \ref{main_thm}, let $(u, \tau)$ be a strong solution of \eqref{EQ-all}-\eqref{boundary-con}. Then $\mathbb{T}=\tau+\alpha\mathbb{I}$ remains nonnegative definite. Furthermore,
$$\|\tau\|_{L^1}\leq\|\mathrm{tr}\mathbb{T}\|_{L^1}+C(\alpha,\Omega).$$
\end{cor}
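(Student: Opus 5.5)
The corollary has two parts: the non-negativity of $\mathbb{T}$, and the $L^1$ bound on $\tau$ in terms of $\mathrm{tr}\,\mathbb{T}$.

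\emph{Step 1: non-negativity of $\mathbb{T}$.} The plan is to apply Lemma \ref{lem_positivity} to the strong solution $(u,\mathbb{T})$ of \eqref{EQ-t19h}. The only hypothesis to check is $u\in L^1(0,T;W^{1,\infty}(\Omega))$. For a strong solution in the class of Theorem \ref{main_thm} (or Proposition \ref{Local_Existence}) we have $u\in L^2(0,T;H^3(\Omega))$. In two dimensions the Sobolev embedding $H^3(\Omega)\hookrightarrow W^{1,\infty}(\Omega)$ holds, since $H^2\hookrightarrow L^\infty$ is applied to $\nabla u$. Hence $u\in L^2(0,T;W^{1,\infty})\subset L^1(0,T;W^{1,\infty})$ on the bounded interval $(0,T)$. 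Symmetry of $\mathbb{T}_0=\tau_0+\alpha\mathbb{I}$ and its non-negativity are assumed, and $\mathbb{T}_0\in H^1$ because $\tau_0\in H^1$ and $\Omega$ is bounded. Lemma \ref{lem_positivity} then gives that $\mathbb{T}$ is symmetric and nonnegative definite a.e.

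\emph{Step 2: the pointwise matrix inequality.} For a symmetric nonnegative definite $2\times2$ matrix $\mathbb{T}$ with eigenvalues $\lambda_1,\lambda_2\ge0$, every entry satisfies $|\mathbb{T}_{ij}|\le \mathrm{tr}\,\mathbb{T}$. The diagonal entries satisfy $0\le\mathbb{T}_{ii}\le \mathrm{tr}\,\mathbb{T}$. The off-diagonal entries satisfy $|\mathbb{T}_{12}|\le\sqrt{\mathbb{T}_{11}\mathbb{T}_{22}}\le \tfrac12\mathrm{tr}\,\mathbb{T}$ by non-negativity of the $2\times 2$ determinant. Equivalently, $|\mathbb{T}|=(\lambda_1^2+\lambda_2^2)^{1/2}\le\lambda_1+\lambda_2=\mathrm{tr}\,\mathbb{T}$ in the Frobenius norm. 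Whatever matrix norm is used to define $\|\tau\|_{L^1}$, we therefore get $|\mathbb{T}|\le C\,\mathrm{tr}\,\mathbb{T}$ pointwise, with an absolute constant; for the Frobenius norm the constant is $1$, which matches the stated inequality.

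\emph{Step 3: integration.} Write $\tau=\mathbb{T}-\alpha\mathbb{I}$ and use the triangle inequality pointwise to get $|\tau|\le|\mathbb{T}|+\alpha|\mathbb{I}|\le \mathrm{tr}\,\mathbb{T}+\sqrt2\,\alpha$. Integrating over $\Omega$ gives
\[
\|\tau\|_{L^1}\le\|\mathrm{tr}\,\mathbb{T}\|_{L^1}+\sqrt2\,\alpha|\Omega|,
\]
so one may take $C(\alpha,\Omega)=\sqrt2\,\alpha|\Omega|$. Here $\mathrm{tr}\,\mathbb{T}\ge0$, so its $L^1$ norm is just its integral; this is what later allows the trace equation to control it.

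\emph{Main obstacle.} Nothing in this corollary is genuinely hard once Lemma \ref{lem_positivity} is available. The only point requiring care is the regularity hypothesis in Step 1: one must make sure the strong solution under consideration actually has $u\in L^1_TW^{1,\infty}$. This means invoking the $L^2_TH^3$ regularity from Proposition \ref{Local_Existence} on the existence interval and the two-dimensional embedding. Beyond that, the argument only needs consistency about which matrix norm defines $\|\tau\|_{L^1}$.
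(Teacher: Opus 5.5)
Your proposal is correct and follows the route the paper indicates; the paper itself omits the details and defers to Lemma 2.2 of Nong et al. Non-negativity comes from Lemma \ref{lem_positivity}, whose hypothesis $u\in L^1(0,T;W^{1,\infty})$ you rightly verify via $u\in L^2(0,T;H^3)$ and the two-dimensional embedding $H^2\hookrightarrow L^\infty$. The $L^1$ bound then follows from the pointwise inequality $|\mathbb{T}|\le\mathrm{tr}\,\mathbb{T}$ for symmetric nonnegative $2\times2$ matrices, and your write-up simply supplies the omitted details, including the explicit constant $C(\alpha,\Omega)=\sqrt{2}\,\alpha|\Omega|$.
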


\begin{remark}
In Lemma 2.2 of \cite{Nong_2025}, the authors proved the nonnegativity of $\mathrm{det}(\mathbb{T})$ for a more complicated system related to \eqref{EQ-t19h}. With slight modifications, the proofs of Lemma \ref{lem_positivity} and Corollary \ref{cor_L1} can be completed in a similar manner. We omit the details here for brevity.
\end{remark}
We also need the following Aubin–Lions-Simon compactness lemma.
\begin{lem}[\cite{Simon_1986}]\label{Simom} 
    Let $X\subset E\subset Y$ be Banach spaces, with the embedding $X\to E$ compact. Then the following embedding is compact:
    \begin{gather*}
        L^\infty(0,T;X)\cap\left\{\phi:\frac{\partial\phi}{\partial t}\in L^r(0,T;Y)\right\}\to C([0,T]; E),~~if~~1<r<\infty.
    \end{gather*}
\end{lem}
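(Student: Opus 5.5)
The plan is to use the Arzelà--Ascoli theorem in $C([0,T];E)$. Throughout, $E\subset Y$ continuously. Let $\mathcal F$ be a bounded set in the space on the left, so that
$$\|f\|_{L^\infty(0,T;X)}\le M,\qquad \|\partial_t f\|_{L^r(0,T;Y)}\le K\qquad\text{for all } f\in\mathcal F .$$
I must show that $\mathcal F\subset C([0,T];E)$ and that $\mathcal F$ is relatively compact there. By Arzelà--Ascoli it suffices to prove two things. First, $\mathcal F$ is equicontinuous into $E$. Second, for each $t$ the set $\{f(t):f\in\mathcal F\}$ is relatively compact in $E$.

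\textbf{Step 1: two basic inequalities.} The first is the Ehrling--Lions interpolation inequality. Because $X\hookrightarrow E$ is compact and $E\hookrightarrow Y$ is continuous, for every $\eta>0$ there is $C_\eta$ with
$$\|v\|_E\le \eta\|v\|_X+C_\eta\|v\|_Y \qquad\text{for all } v\in X .$$
I would prove this by contradiction. If it failed, there would be $v_n$ with $\|v_n\|_X=1$ and $\|v_n\|_E>\eta+n\|v_n\|_Y$. Compactness gives a subsequence $v_n\to v$ in $E$. Since $\|v_n\|_E$ is bounded, $\|v_n\|_Y\to0$, and continuity of $E\hookrightarrow Y$ forces $v=0$. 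This contradicts $\|v\|_E\ge\eta$.

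The second is a Hölder modulus in $Y$. Since $\partial_t f\in L^r(0,T;Y)$, the function $f$ has a representative in $C([0,T];Y)$, and for $0\le s\le t\le T$,
$$\|f(t)-f(s)\|_Y\le |t-s|^{1-1/r}K .$$
Here $r>1$ gives an explicit uniform modulus.

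\textbf{Step 2: equicontinuity in $E$.} Let $N_f$ be the null set off which $\|f(t)\|_X\le M$. For $t,s\notin N_f$, apply Ehrling to $f(t)-f(s)$:
$$\|f(t)-f(s)\|_E\le 2\eta M+C_\eta K|t-s|^{1-1/r}.$$
Choose $\eta$ first and then $|t-s|$ small; this gives a uniform modulus of continuity on the full-measure set $[0,T]\setminus N_f$. Hence $f$ is uniformly continuous into $E$ on a dense set and extends to an element of $C([0,T];E)$. By the continuous embedding into $Y$, this extension agrees with the $Y$-continuous representative. The same modulus then holds on all of $[0,T]$, uniformly over $\mathcal F$.

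\textbf{Step 3: pointwise relative compactness.} This is where one must be careful, because $L^\infty(0,T;X)$ only gives an $X$-bound for a.e.\ $t$, and $X$ need not be reflexive. I would argue as follows. For any $t$, pick $t_n\notin N_f$ with $t_n\to t$; then $f(t_n)\to f(t)$ in $E$ by Step 2. Each $f(t_n)$ lies in the ball $B_X(M)$, so $f(t)$ lies in the $E$-closure of $B_X(M)$. This closure is compact in $E$ because $X\hookrightarrow E$ is compact. Thus $\{f(t):f\in\mathcal F\}$ lies in a fixed compact subset of $E$.

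Arzelà--Ascoli now yields relative compactness of $\mathcal F$ in $C([0,T];E)$, which is the asserted compactness of the embedding. I expect the main obstacle to be the Ehrling inequality in Step 1, which is the only place compactness enters. Step 3 avoids any weak-$*$ or reflexivity argument by working with closures in $E$.
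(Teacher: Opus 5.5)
Your proof is correct and complete: the Ehrling inequality, the uniform $|t-s|^{1-1/r}$ modulus in $Y$, the density argument producing a representative in $C([0,T];E)$, and the pointwise bound via the $E$-closure of the $X$-ball together supply exactly the two hypotheses of Arzel\`a--Ascoli. The paper gives no proof of this lemma and only cites Simon, whose argument is essentially yours: an interpolation inequality of Ehrling type combined with a time-translation (Arzel\`a--Ascoli) criterion in $C([0,T];E)$, with continuity of $E\hookrightarrow Y$ as the implicit hypothesis you correctly state.
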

\section{Local-in-time well-posedness}\label{Sec_Local}
 This section is devoted to proving the local well-posedness of system \eqref{EQ-all}-\eqref{boundary-con}.   
The proof of Proposition \ref{Local_Existence} can be completed via a standard fixed-point argument. Here, we provide a sketch of the proof for the reader's convenience. To begin with, for $T>0$, we introduce the Banach space 
$X_T=C([0,T];H^{1})\times C([0,T];L^2)$, and define
\begin{align*}
K_{T}=\Big\{& ({v}, {\sigma}) \Big|{\sigma}\in C([0,T];H^{1}(\Omega))\cap L^{2}([0,T];H^{2}(\Omega)),~\partial_t \sigma\in L^{2}([0,T];L^{2}(\Omega)),\\
&~{\sigma}(0)=\tau_{0}\mathrm{~in~} \Omega,~\partial_n\sigma|_{\partial\Omega}=0,~{v}\in C([0,T];H^{2}(\Omega))\cap L^{2}([0,T];H^{3}(\Omega)),~\mathrm{div}v=0,  \\		
&~v|_{\partial\Omega}=0,~\| \sigma \|_{L_T^{\infty} H^1}^2 + \| \sigma\|_{L_T^2 H^2}^2 + \| \partial_t \sigma \|_{L_T^2 L^2}^2\leq B_1,~\| v\|_{L_T^{\infty} H^2}^2 + \|v\|_{L_T^2 H^3}^2\leq B_2.\Big\}
		\end{align*}
where $B_1$ and $B_2$ are two constants defined in \eqref{eq_B1B2}. Without loss of generality, we assume that $T\leq 1$ in the sequel. Next, we show that for given $B_1$
and $B_2$ large enough,  $K_T\neq \varnothing$. Specifically, let $\tilde{\sigma}$ be the strong solution of the following problem 
\begin{equation*}
\begin{cases}
				\partial_t \tilde{\sigma}+\gamma\tilde{\sigma}-\varepsilon \Delta \tilde{\sigma} = 0, & \text{in } \Omega , \\
				\partial_n \tilde{\sigma} = 0, & \text{on } \partial \Omega  , \\
				\tilde{\sigma}(0) = \tau_0, & \text{in } \Omega.
\end{cases}
\end{equation*}
   Then, from Lemma \ref{lem_tau_para}, there exists a positive $C_1$, which is independent of $T$, such that 
   $$
    \|\tilde{\sigma}\|_{L_T^\infty H^1}^2 + \|\tilde{\sigma}\|_{L_T^2H^2}^2+ \|\partial_t \tilde{\sigma}\|_{L_T^2 L^2}^2  
 \leq C_1 \|\tau_0 \|_{H^1}^2.
   $$
Hence, we can choose 
\begin{equation}\label{bound_B_1_1}
    B_1\geq C_1 \|\tau_0 \|_{H^1}^2.
\end{equation}
Then, $(0,\tilde{\sigma}) \in K_T$ for any $0<T\leq 1$, which implies that $K_T\neq\varnothing$. Moreover, it is easy to check that, for any given $0<T\leq 1$, $K_T$  is a bounded, closed, and convex subset of  $X_T$, when endowed with the topology of $C([0,T];H^1)\times C([0,T];L^2)$. For given $(v,\sigma)\in K_T$, we define the mapping 
$$(u,\tau):=\Phi(v,\sigma),$$
where $(u,\tau)$ is the unique solution of the following linearized problem: 
\begin{equation}\label{eq_Phi}
 \begin{cases}
        -(1-\alpha) \mathbb{P}\Delta u =\mathbb{P}\div\sigma ,&\text{in } \Omega \times (0, T),\\ 
        \partial_t\tau+(v\cdot\nabla) \tau+\gamma \tau-\varepsilon\Delta\tau =\nabla v\sigma+\sigma\nabla^\top v+\alpha (\nabla v+\nabla^\top v),&\text{in } \Omega \times (0, T) ,\\
        \div u =0, &\text{in }\Omega\times(0,T),\\
					u =0,~~\partial_n \tau = 0, &\text{on }\partial\Omega\times(0,T), \\ 
     \tau = \tau_0,       &  \text{in }\Omega.
    \end{cases}
\end{equation}   
In order to apply the Schauder's fixed point theorem, we will prove that, for  appropriate choices of $B_1$ and $B_2$,   there exists a time $T^*> 0$  such that, for all $0<T \leq T^*$, $\Phi (K_T)\subset  K_T$.
\begin{lem}\label{lem_map}
		For appropriately chosen $B_1$ and $B_2$,  there exists a time $T^*> 0$  such that  $\Phi (K_T) \subset K_T$, for all $0<T \leq T^*$.
		\end{lem}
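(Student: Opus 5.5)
We estimate the two components of $\Phi(v,\sigma)=(u,\tau)$ separately. The strategy is to choose $B_2$ in terms of $B_1$ using only the Stokes problem. We then choose $B_1$ in terms of $\|\tau_0\|_{H^1}$ using Lemma \ref{lem_tau_para}, and finally shrink $T$ so that the nonlinear source terms, which carry a positive power of $T$, fit inside $B_1$.

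\emph{Step 1 (velocity).} For a.e. $t$, $u(t)$ solves the Stokes problem \eqref{EQ-t22h} with $f=(1-\alpha)^{-1}\div\sigma(t)$, where the pressure is recovered from the Leray projection. We apply Lemma \ref{stoke} with $k=0$ and $k=1$ on the $C^{2,1}$ domain:
\[
\|u(t)\|_{H^2}\le C\|\sigma(t)\|_{H^1},\qquad \|u(t)\|_{H^3}\le C\|\sigma(t)\|_{H^2}.
\]
Taking the supremum in time and the $L^2$ norm in time gives
\[
\|u\|_{L_T^\infty H^2}^2+\|u\|_{L_T^2H^3}^2\le C_2 B_1 .
\]
Linearity of the Stokes solution operator and $\sigma\in C([0,T];H^1)$ give $u\in C([0,T];H^2)$. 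We therefore set $B_2:=C_2B_1$. This choice is independent of $T$.

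\emph{Step 2 (stress).} We apply Lemma \ref{lem_tau_para} with transport field $v$, which lies in $L^\infty_TH^1_0$ and is divergence free, and with source
\[
g=\nabla v\sigma+\sigma\nabla^\top v+\alpha(\nabla v+\nabla^\top v).
\]
We bound $g$ using $H^2\hookrightarrow W^{1,4}$, the embedding $H^1\hookrightarrow L^4$, and $T\le1$:
\[
\|g\|_{L_T^2L^2}^2\le C\int_0^T\big(\|\nabla v\|_{L^4}^2\|\sigma\|_{L^4}^2+\|\nabla v\|_{L^2}^2\big)\,{\rm d}t\le C\,T\,(B_2B_1+B_2).
\]
In the exponent of \eqref{tau} we use $\|v\|_{L^4}\le C\|v\|_{H^2}$, which gives
\[
T+\|v\|_{L_T^2L^4}^2+\|v\|_{L_T^4L^4}^4\le T(1+CB_2+CB_2^2).
\]
Hence
\[
\|\tau\|_{L_T^\infty H^1}^2+\|\tau\|_{L_T^2H^2}^2+\|\tau_t\|_{L_T^2L^2}^2\le C_3\big(\|\tau_0\|_{H^1}^2+CT(B_1B_2+B_2)\big)e^{CT(1+B_2+B_2^2)}.
\]

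\emph{Step 3 (choice of constants).} This step is the only delicate point, because of the ordering of the choices. The constant $C_3$ in \eqref{tau} is $T$-independent. We take
\begin{equation}\label{eq_B1B2}
B_1:=\max\{2C_3,C_1\}\,\|\tau_0\|_{H^1}^2+1,\qquad B_2:=C_2B_1.
\end{equation}
Next we choose $T^*\le1$ so small that
\[
e^{CT^*(1+B_2+B_2^2)}\le\tfrac32\quad\text{and}\quad C_3\,CT^*(B_1B_2+B_2)\cdot\tfrac32\le\tfrac14 B_1 .
\]
Then the right-hand side of the Step 2 bound is at most $\tfrac34B_1+\tfrac14B_1=B_1$ for every $0<T\le T^*$.

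The remaining membership conditions for $K_T$ follow directly. We have $\tau(0)=\tau_0$ and $\partial_n\tau=0$ on $\partial\Omega$. The velocity satisfies $\div u=0$ and $u|_{\partial\Omega}=0$. The time-regularity of $u$ is inherited from that of $\sigma$ through linearity of the Stokes operator. Together these show $\Phi(K_T)\subset K_T$ for all $0<T\le T^*$.

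The main obstacle is ensuring that $B_2$ depends only on $B_1$ and not on $T$, so that no circularity arises. The elliptic gain in Step 1 gives exactly this.
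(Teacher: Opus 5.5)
Your proof is correct and follows essentially the paper's argument: Stokes regularity gives the $T$-independent velocity bound $C_2B_1$, Lemma \ref{lem_tau_para} gives the stress bound with both the source term and the exponent carrying a factor of $T$, and then $B_1$, $B_2$ and $T^*$ are fixed in that order. The differences are cosmetic. The paper takes $B_2=C_2B_1+1$ so that the exponent collapses to $C_2T^*B_2^2$. Your extra $+1$ in $B_1$ also avoids the degenerate case $\tau_0=0$, in which the paper's choice gives $B_1=0$ and hence $T^*=0$.
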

		\begin{proof}
			For $f=\mathrm{div}\sigma$ and $g=\sigma\nabla v+\nabla^\top v\sigma +\alpha(\nabla v+\nabla^\top v)$, we have 
            \begin{gather*}
                  \| f \|_{L_T^\infty L^2}^2 \leq C \| \sigma \|_{L_T^\infty H^1}^2\leq C B_1,~~\| f \|_{L_T^2H^1}^2 \leq C \| \sigma \|_{L_T^2H^2}^2\leq C B_1,  \\
                \| g \|_{L_T^2L^2}^2 \leq T^* C  ( \| v \|_{L_T^\infty H^1}^2 + \| v \|_{L_T^\infty H^2}^2 \| \sigma\|_{L_T^\infty H^1}^2 )\leq T^* C ( B_2 + B_1B_2 ).
            \end{gather*} 
               Then, choosing $B_2\geq 1$, using Lemmas \ref{stoke} and \ref{lem_tau_para}, there exists a constant $C_2$ such that 
            \begin{align}\label{u}
         	&\|u\|_{L_T^{\infty} H^2}^2 + \|u\|_{L_T^2 H^3}^2\leq C_2 B_{1}.
         \end{align}  
        and 
          \begin{align} \label{tau_est}
&\|\tau\|_{L_T^\infty H^1}^2 + \|  \tau\|_{L_T^2H^2}^2+ \|\partial_t \tau\|_{L_T^2 L^2}^2 \nonumber\\
&\leq C_2\left( \|\tau_0 \|_{H^1}^2 + T^*  ( B_2 + B_1B_2 )  \right)\exp\left( C_2 T^*  B_2^2   \right),
\end{align} 
  for all $0<T\leq T^*$. Based on \eqref{u} and \eqref{tau_est}, we can choose 
   \begin{equation}\label{eq_B1B2}
        B_1:=  (C_1 + 2eC_2) \|\tau_0 \|_{H^1}^2,~~B_2:= C_2 B_1 +1.
    \end{equation} 
    Then, setting 
        \begin{equation}\label{Tstar}
        T^*:=\min\left\{1,\frac{1}{ C_2 B_2^2},\frac{B_1}{2eC_2( B_2 + B_1B_2 )}\right\},
    \end{equation}
    we can finish the proof.
		\end{proof} 
Next, we show that $\Phi $ is continuous.
		\begin{lem}\label{continous}
        Let $T^*$ be defined as in \eqref{Tstar}. Then, for every $0<T\leq T^*$, the mapping $\Phi:K_T\to X_T$ is continuous. 
		\end{lem}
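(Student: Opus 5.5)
The continuity of $\Phi$ follows from a stability estimate for differences. Take $(v_n,\sigma_n)\to(v,\sigma)$ in $X_T=C([0,T];H^1)\times C([0,T];L^2)$, with all elements in $K_T$. Set $(u_n,\tau_n)=\Phi(v_n,\sigma_n)$, $(u,\tau)=\Phi(v,\sigma)$, and write $\delta u=u_n-u$, $\delta\tau=\tau_n-\tau$, $\delta v=v_n-v$, $\delta\sigma=\sigma_n-\sigma$. The goal is to show
\[
\|\delta u\|_{L^\infty_TH^1}+\|\delta\tau\|_{L^\infty_TL^2}\to0 .
\]
The velocity part is immediate. The difference $\delta u$ solves the Stokes problem \eqref{EQ-t22h} with right-hand side $\operatorname{div}\delta\sigma/(1-\alpha)$. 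The second part of Lemma \ref{stoke} then gives, pointwise in time,
\[
\|\delta u(t)\|_{H^1}\le C\|\delta\sigma(t)\|_{L^2},
\]
and hence $\|\delta u\|_{L^\infty_TH^1}\le C\|\delta\sigma\|_{L^\infty_TL^2}\to0$.

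For the stress, I subtract the two $\tau$-equations in \eqref{eq_Phi}:
\[
\partial_t\delta\tau+(v_n\cdot\nabla)\delta\tau+\gamma\delta\tau-\varepsilon\Delta\delta\tau=-(\delta v\cdot\nabla)\tau+\nabla\delta v\,\sigma_n+\nabla v\,\delta\sigma+\sigma_n\nabla^\top\delta v+\delta\sigma\nabla^\top v+\alpha(\nabla\delta v+\nabla^\top\delta v),
\]
with $\partial_n\delta\tau=0$ and $\delta\tau(0)=0$. I test with $\delta\tau$. The transport term drops because $\operatorname{div}v_n=0$ and $v_n|_{\partial\Omega}=0$, which leaves
\[
\tfrac12\tfrac{\rm d}{{\rm d}t}\|\delta\tau\|_{L^2}^2+\varepsilon\|\nabla\delta\tau\|_{L^2}^2+\gamma\|\delta\tau\|_{L^2}^2\le \int(\text{RHS}):\delta\tau .
\]
The forcing terms are estimated one at a time.
\begin{itemize}
\item $|\int(\delta v\cdot\nabla)\tau:\delta\tau|\le\|\delta v\|_{L^4}\|\nabla\tau\|_{L^4}\|\delta\tau\|_{L^2}\lesssim\|\delta v\|_{H^1}\|\tau\|_{H^2}\|\delta\tau\|_{L^2}$. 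Here $\|\tau\|_{L^2_TH^2}^2\le B_1$ because $\Phi(K_T)\subset K_T$ by Lemma \ref{lem_map}.
\item $|\int\nabla\delta v\,\sigma_n:\delta\tau|\le\|\nabla\delta v\|_{L^2}\|\sigma_n\|_{L^\infty}\|\delta\tau\|_{L^2}$. I use $\|\sigma_n\|_{L^\infty}\lesssim\|\sigma_n\|_{H^2}$, which lies in $L^2_T$.
\item $|\int\nabla v\,\delta\sigma:\delta\tau|\le\|\nabla v\|_{L^\infty}\|\delta\sigma\|_{L^2}\|\delta\tau\|_{L^2}$. I use $\|\nabla v\|_{L^\infty}\lesssim\|v\|_{H^3}\in L^2_T$, or alternatively $\|\nabla v\|_{L^4}\|\delta\sigma\|_{L^2}\|\delta\tau\|_{L^4}$ with $v\in L^\infty_TH^2$ together with Ladyzhenskaya and absorption into $\varepsilon\|\nabla\delta\tau\|_{L^2}^2$.
\item The linear term $\alpha(\nabla\delta v+\nabla^\top\delta v)$ is bounded by $\|\delta v\|_{H^1}\|\delta\tau\|_{L^2}$.
\end{itemize}
Collecting these, Young's inequality gives
\[
\tfrac{\rm d}{{\rm d}t}\|\delta\tau\|_{L^2}^2\le \bigl(1+\|\tau\|_{H^2}^2+\|\sigma_n\|_{H^2}^2+\|v\|_{H^3}^2\bigr)\|\delta\tau\|_{L^2}^2+C\bigl(\|\delta v\|_{H^1}^2+\|\delta\sigma\|_{L^2}^2\bigr).
\]
The coefficient in front of $\|\delta\tau\|_{L^2}^2$ has $L^1_T$ norm bounded by $C(1+B_1+B_2)$, uniformly in $n$. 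Gronwall's inequality with $\delta\tau(0)=0$ then yields
\[
\|\delta\tau\|_{L^\infty_TL^2}^2\le C(B_1,B_2,T)\bigl(\|\delta v\|_{L^\infty_TH^1}^2+\|\delta\sigma\|_{L^\infty_TL^2}^2\bigr)\to0 .
\]
This would prove continuity of $\Phi$ in the $X_T$ topology.

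The main obstacle is that convergence holds only in the weak topology $C_TH^1\times C_TL^2$, while the nonlinearities involve $\nabla\delta v$ and $\delta\sigma$ without derivatives to spare. Every $H^2$ or $H^3$ norm therefore has to land on the fixed, uniformly bounded factors, using the $K_T$ bounds, and never on the differences. The delicate term is $\delta\sigma\nabla^\top v$: $\delta\sigma$ is only controlled in $L^2$, so $\nabla v$ must be taken in $L^\infty$. This requires $v\in L^2_TH^3$ and the 2D embedding $H^2\subset L^\infty$, which is exactly why $K_T$ carries the $L^2_TH^3$ bound on $v$.
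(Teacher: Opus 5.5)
Your proof is correct and is essentially the paper's argument: the Stokes $H^1$ bound for the velocity difference, and an $L^2$ energy estimate plus Gronwall for the stress difference, with all high norms placed on factors controlled by $K_T$. The one deviation is the term $\nabla v\,\delta\sigma$ (and its transpose). The paper bounds it by $\|\nabla v^2\|_{L^4}\|\tilde\sigma\|_{L^4}$, so \eqref{tauH1} ends up with $\|\tilde\sigma\|_{L^\infty_TH^1}$, a stronger norm than the one $X_T$ puts on $\sigma$; concluding continuity then tacitly needs an interpolation against the uniform bound $\|\tilde\sigma\|_{H^1}\le 2\sqrt{B_1}$. Your choice $\|\nabla v\|_{L^\infty}\lesssim\|v\|_{H^3}\in L^2_T$ instead gives a Lipschitz bound directly in the $C([0,T];H^1)\times C([0,T];L^2)$ norm.
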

		\begin{proof}					
 Let $(u^{1},\tau^{1}),(u^{2},\tau^{2})$ in $K_{T^*}$, and$(u^{i},\tau^{i})=\Phi(v^{i},\sigma^{i}),i=1,2.$ Define $(\tilde{v},\tilde{\sigma}):= (v^1-v^2,\sigma^1-\sigma^2)$ and $(\tilde{u},\tilde{\tau}):= (u^1-u^2,\tau^1-\tau^2)$. Then, by the definition of $\Phi$, one can find that  $(\tilde{u},\tilde{\tau})$ satisfying 
		\begin{equation}\label{EQ-t23h}
				\begin{cases}
					(1-\alpha) A\tilde{u}=\mathbb{P}{\rm div}\,\tilde{\sigma}, \\
					\tilde{\tau}_{t}+({v}^{2}\cdot\nabla)\tilde{\tau}+{\gamma}\tilde{\tau}-\varepsilon\Delta\tilde{\tau}=-(\tilde{v}\cdot\nabla){\tau}^1+  \nabla \tilde{v}\sigma^1+\sigma^1\nabla^\top\tilde{v}\\
					\qquad\qquad\qquad\qquad\qquad\qquad~~+ \nabla {v}^2\tilde{\sigma} +\tilde{\sigma}\nabla^\top{v}^2+\alpha(\nabla\tilde{v}+\nabla^\top\tilde{v}).
				\end{cases}
			\end{equation}
			Taking the $L_2$-inner product of equation $\eqref{EQ-t23h}_2$ with $\tilde{\tau}$,  integrating by parts, we have		
			\begin{align} 
				&\frac{1}{2}\frac{\mathrm{d}}{\mathrm{d}t}||\tilde{\tau}||_{L^2}^2+\gamma||\tilde{\tau}||_{L^2}^2+\varepsilon||\nabla\tilde{\tau}||_{L^2}^2\nonumber\\
				\leq\,&\int  |(\tilde{v}\cdot\nabla){\tau}^1||\tilde{\tau}|+2 | \nabla\tilde{v}||{\sigma}^1||\tilde{\tau}| +2 ||\nabla{v}^2||\tilde{\sigma}||\tilde{\tau}| +2\alpha |\nabla\tilde{v}||\tilde{\tau}|\mathrm{d}x\mathrm{d} y \nonumber\\ 
                \leq\,& \frac{\gamma}{2} \|\tilde{\tau}\|^2_{L^2} + C\left( \|\nabla{\tau}^1\|^2_{L^4}\| \tilde{v}\|^2_{L^4}+ \|\sigma^1\|^2_{L^\infty} \|\nabla\tilde{v}\|^2_{L^2}  + \|\nabla v^2\|^2_{L^4} \|\tilde{\sigma}\|^2_{L^4} + \|\nabla\tilde{v}\|^2_{L^2}\right) \nonumber\\
                \leq\,& \frac{\gamma}{2} \|\tilde{\tau}\|^2_{L^2} + C\left(\| \tau^1\|^2_{H^2} +\| \sigma^1\|^2_{H^2}+\| v^2\|^2_{H^2}+1\right)\left( \| \tilde{v}\|^2_{H^1}+\| \tilde{\sigma}\|^2_{H^1}\right), \nonumber
			\end{align}
            which implies  that 
            			\begin{align} \label{eq_tauL2}
				& \frac{\mathrm{d}}{\mathrm{d}t}||\tilde{\tau}||_{L^2}^2+\gamma||\tilde{\tau}||_{L^2}^2+\varepsilon||\nabla\tilde{\tau}||_{L^2}^2\nonumber\\ 
                \leq\,&  C\left(\| \tau^1\|^2_{H^2} +\| \sigma^1\|^2_{H^2}+\| v^2\|^2_{H^2}+1\right)\left( \| \widetilde{v}\|^2_{H^1}+\| \widetilde{\sigma}\|^2_{H^1}\right). 
			\end{align} 
            Integrating the above inequality over $[0,T]$, we deduce that
			\begin{align}\label{tauH1}
				\|\widetilde{\tau}\|_{L_T^\infty L^2}^2&\leq C (\|{\tau}^1\|^2_{L_T^2H^2}+||\sigma^1||_{L_T^2H^2}^{2}+||v^2||_{L_T^2H^2}^{2}+T)(\|\tilde v\|_{L_T^\infty H^1}^{2}+\|\tilde\sigma\|_{L_T^\infty H^1}^{2}),\nonumber\\
				& \leq C(\alpha,\varepsilon,\gamma,B_1,B_2)   \left( \| \tilde{\sigma} \|_{L_T^{\infty}H^1}^2 + \| \tilde{v} \|_{L_T^\infty H^1}^2 \right),
			\end{align}
            where the fact $\tilde\tau_{0}= \tau_{0}^1- \tau_{0}^2=0$ is used.  For the estimate of $\tilde{u}$, using Lemma \ref{stoke}, we have
			\begin{equation}\label{u_H1}
				\|\tilde{u}\|_{L_T^{\infty}H^{1}}^{2}\leq C(\alpha)  \| \tilde{\sigma}  \|_{L_T^{\infty}L^2}^2 .
			\end{equation} 
            Using the estimate \eqref{tauH1} and \eqref{u_H1}, one can easily verify the continuity of $\Phi:K_T\to X_T$. The proof is complete.  
		\end{proof}
		\begin{lem}\label{compact}
        Let $T^*$ be defined as in \eqref{Tstar}. Then, for every $0<T\leq T^*$, the mapping $\Phi:K_T\to X_T$ is compact. 
		\end{lem}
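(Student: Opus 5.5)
We need to show that $\Phi(K_T)$ is relatively compact in $X_T=C([0,T];H^1)\times C([0,T];L^2)$. The plan is to apply the Aubin--Lions--Simon lemma (Lemma \ref{Simom}) separately to each component, using the uniform bounds already obtained in the proof of Lemma \ref{lem_map}.

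\textbf{Stress component.} For $(v,\sigma)\in K_T$, the image satisfies $\|\tau\|_{L_T^\infty H^1}^2+\|\partial_t\tau\|_{L_T^2L^2}^2\le B_1$ by \eqref{tau_est}. We apply Lemma \ref{Simom} with $X=H^1$, $E=L^2$, $Y=L^2$ and $r=2$. Rellich's theorem on the $C^{2,1}$ bounded domain makes $H^1\hookrightarrow L^2$ compact. Hence $\{\tau\}$ is relatively compact in $C([0,T];L^2)$, which is exactly the topology of the second factor of $X_T$.

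\textbf{Velocity component.} By \eqref{u}, the family $\{u\}$ is bounded in $L^\infty(0,T;H^2)$, and $H^2\hookrightarrow H^1$ is compact. What remains is a uniform time-derivative bound in some space $Y\supset H^1$. Since $u$ solves the Stokes problem with right-hand side $\operatorname{div}\sigma$ and no time derivative, I will differentiate the Stokes equation in time. This gives $-(1-\alpha)\Delta u_t+\nabla p_t=\operatorname{div}\sigma_t$, $\operatorname{div}u_t=0$, $u_t|_{\partial\Omega}=0$. The second part of Lemma \ref{stoke}, applied with $F=\sigma_t$, yields
\[
\|\partial_t u\|_{L^2_TH^1}\le C\|\partial_t\sigma\|_{L^2_TL^2}\le C B_1^{1/2}.
\]
Lemma \ref{stoke} is stated for $F\in H^1$, so I will either justify the estimate by density or, more robustly, apply it to difference quotients $(\sigma(t+h)-\sigma(t))/h$. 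Linearity of the Stokes solution map justifies this. Lemma \ref{Simom} with $X=H^2$, $E=H^1$, $Y=H^1$, $r=2$ then gives relative compactness of $\{u\}$ in $C([0,T];H^1)$.

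Combining the two components, $\Phi(K_T)$ is relatively compact in $X_T$, so $\Phi$ is compact. Together with Lemma \ref{continous}, this gives continuity and compactness on the closed convex set $K_T$, which is what Schauder's theorem needs.

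\textbf{Main obstacle.} The only delicate step is the time-derivative bound for $u$. It relies on the weak form of the Stokes estimate for divergence-form data with merely $L^2$ regularity, applied to $\sigma_t$. If that step causes trouble, a fallback is Arzel\`a--Ascoli directly: the same $H^1$ estimate applied to $\sigma(t)-\sigma(s)$ gives
\[
\|u(t)-u(s)\|_{H^1}\le C\|\sigma(t)-\sigma(s)\|_{L^2}\le C|t-s|^{1/2}B_1^{1/2}.
\]
This provides equicontinuity in $C([0,T];H^1)$. The uniform $H^2$ bound provides pointwise relative compactness in $H^1$, and Arzel\`a--Ascoli then yields the same conclusion.
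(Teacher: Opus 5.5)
Your proposal is correct and follows the paper's argument: you apply Aubin--Lions--Simon to each component, and you get the time-derivative bound for $u$ by differentiating the Stokes equation in time and using $\partial_t\sigma\in L^2(0,T;L^2)$. The only difference is cosmetic: the paper records $\partial_t u\in L^2(0,T;L^2)$ and takes $Y=L^2$, whereas your divergence-form Stokes estimate gives the sharper $\partial_t u\in L^2(0,T;H^1)$ with $Y=H^1$, and either choice suffices.
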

        \begin{proof}
            Based on Lemma \ref{stoke} and the definition of $K_T$ and $X_T$, for $(v,\sigma)\in K_T$, one can verify that  $(u,\tau):=\Phi(v,\sigma)$ satisfies 
            \begin{gather*}
                u\in L^\infty(0,T;H^2),~\partial_tu\in L^2(0,T;L^2),\\
                \tau\in L^\infty(0,T;H^1),~\partial_t\tau\in L^2(0,T;L^2).
            \end{gather*}
            Here the bound for $\partial_tu$ follows by differentiating the Stokes equation in time and using $\partial_t\sigma\in L^2(0,T;L^2)$.
            Then, using Lemma \ref{Simom}, one can finish the proof directly.
        \end{proof}
		\begin{proof}[\bf Proof of Proposition \ref{Local_Existence}]
			Combining Lemmas~\ref{lem_map}, \ref{continous}, and \ref{compact} with Schauder's fixed point theorem yields the existence of solutions to problem \eqref{EQ-all}--\eqref{boundary-con} enjoying the regularity asserted in Proposition~\ref{Local_Existence}. Next, we prove the uniqueness of the solutions. 
            Let $(u ,\tau )$ and $(\bar{u},\bar{\tau})$ be two fixed points of the map $\Phi$. Set $W=u -\bar{u},$ and $Z=\tau -\bar{\tau}$, one can find that $(W,Z)$ satisfies the following problem: 
		\begin{equation}\label{th3}
			\begin{cases}
				-(1-\alpha)\Delta W=\div Z,  \\
				\partial_{t} Z+\bar{u} \cdot \nabla Z+\gamma Z-\varepsilon \Delta Z=-(W\cdot\nabla)\tau+\nabla W \tau+
				\tau\nabla^\top W+\nabla \bar{u}  Z \\
				\qquad\qquad\qquad\qquad\qquad\qquad~~~ +Z\nabla^\top \bar{u}+ \alpha(\nabla W+\nabla^\top W),  
                \end{cases}
		\end{equation}
        with initial and boundary conditions that 
        		\begin{align*} 
			 Z(x,y,0) = 0,~~(x,y)\in\Omega;~~W=0,~\partial_n Z=0, ~~(x,y,t)
\in\partial\Omega\times(0,T^*]. 		\end{align*}
		Taking the $L^2$-inner product of   $\eqref{th3}_1$ with $\alpha W$, using integration by parts, we have
		\begin{equation*} 
			\alpha(1-\alpha)\|\nabla W\|_{L^2}^2=-\alpha\int \nabla W:Z {\rm d}x{\rm d}y \leq \frac{\alpha(1-\alpha)}{2}\|\nabla W\|_{L^2}^2 + \frac{\alpha}{2(1-\alpha)}\|Z\|_{L^2}^2,
		\end{equation*} 
        which implies
        \begin{equation}\label{th4}
			\alpha(1-\alpha)\|\nabla W\|_{L^2}^2  \leq   \frac{\alpha}{(1-\alpha)}\|Z\|_{L^2}^2,
		\end{equation} 
		Taking the $L^2$-inner product of  $\eqref{th3}_2$ with $Z$ and add the result with $\frac{\gamma(1-\alpha)}{2\alpha}$\eqref{th4}, using integration by parts, we obtain
		\begin{align}\label{J}
			&\frac{1}{2}\frac{\mathrm{d}}{\mathrm{d}t}\|Z\|_{L^2}^2+\frac{\gamma}{2}\|Z\|_{L^2}^2+\varepsilon\|\nabla Z\|_{L^2}^2+ \frac{\gamma(1-\alpha)^2}{2}\|\nabla W\|_{L^2}^2\nonumber\\
           \leq & -\langle(W\cdot\nabla)\tau ,Z\rangle + \langle\nabla W \tau,Z\rangle+ \langle\tau\nabla^\top W,Z\rangle+ \langle\nabla \bar{u}  Z,Z\rangle\nonumber\\
           & + \langle Z\nabla^\top \bar{u}  ,Z\rangle+ \alpha\langle(\nabla W+\nabla^\top W),Z\rangle:=\sum_{i=1}^6J_i. 
		\end{align} 
		Using Sobolev inequality, H\"{o}lder's  inequality, Young's  inequality and Ladyzhenskaya inequality, we can estimate $J_i$ as follows
		\begin{align*}
        |J_1| &\lesssim  \|W\|_{L^4} \|\nabla\tau\|_{L^4} \|Z\|_{L^2} \leq \frac{\gamma(1-\alpha)^2}{12}\|\nabla W\|_{L^2}^2 + C\|\nabla\tau \|_{L^4}^2 \|Z\|_{L^2}^2,\\
			|J_2|+|J_3| &\lesssim \|\nabla W\|_{L^2} \|\tau \|_{L^\infty} \|Z\|_{L^2} \leq \frac{\gamma(1-\alpha)^2}{12}\|\nabla W\|_{L^2}^2 + C\|\tau\|_{H^2}^2 \|Z\|_{L^2}^2, \\
			|J_4|+|J_5|& \lesssim \|\nabla \bar{u}\|_{L^2} \|Z\|_{L^4}^2   \leq \frac{\varepsilon}{2} \|\nabla Z\|_{L^2}^2 +  C(\|\bar{u}\|_{H^1}^2 +1) \|Z\|_{L^2}^2.\\
			|J_6| &\lesssim \|\nabla W\|_{L^2}  \|Z\|_{L^2} \leq \frac{\gamma(1-\alpha)^2}{12}\|\nabla W\|_{L^2}^2+ C  \|Z\|_{L^2}^2. 
		\end{align*}
	 Substituting above estimates into \eqref{J}, we obtain
		\[\frac{\mathrm{d}}{\mathrm{d}t} \|Z\|_{L^2}^2 \lesssim \left(1+ \|\bar{u}\|_{H^1}^2+\| \tau \|_{H^2}^2\right)\|Z\|_{L^2}^2.\]
		Applying the Gronwall inequality and Stokes estimate, we find that $(W,Z)=0$. The proof is complete. 
		\end{proof}
		 
	\section{Uniform estimates}\label{Sec_est} 
		\begin{lem}\label{thm4}
			Under the assumptions of Theorem \ref{main_thm}, let $(u, \tau)$ be the solution of problem \eqref{EQ-all}-\eqref{boundary-con} on $[0, T]\times \Omega$. Then, for all $t\in[0, T]$, we have 
			$$
			\| u \|_{L_t^\infty H^1}^2+\| \tau \|_{L_t^\infty L^2}^2 + \|u\|_{L_t^2 H^2}^2+ \|\tau\|_{L_t^2 H^1}^2 \leq D_1, 
			$$
			where $D_1$ is defined by \eqref{D_1}.
		\end{lem}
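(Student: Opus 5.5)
The plan is to derive a hierarchy of three estimates. First comes an $L^1$-type bound on the conformation tensor coming from its trace, which also yields $u\in L^2_tH^1$. Second is an $L^2$ energy estimate for $\tau$, closed with Gronwall using that $L^1$ control. Third, Stokes regularity (Lemma \ref{stoke}) upgrades the velocity bounds.

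\textbf{Step 1 (trace estimate).} I work with $\mathbb{T}=\tau+\alpha\mathbb{I}$ in \eqref{EQ-t19h}. I take the trace of the $\mathbb{T}$-equation and integrate over $\Omega$. The transport term vanishes because $\operatorname{div}u=0$ and $u|_{\partial\Omega}=0$, and the diffusion term vanishes because $\partial_n\mathbb{T}=0$. Since $\mathbb{T}$ is symmetric, $\mathrm{tr}(\nabla u\mathbb{T}+\mathbb{T}\nabla^\top u)=2\nabla u:\mathbb{T}$. This gives
\begin{equation*}
\frac{\rm d}{{\rm d}t}\int_\Omega \mathrm{tr}\,\mathbb{T}+\gamma\int_\Omega\mathrm{tr}\,\mathbb{T}=2\int_\Omega\nabla u:\mathbb{T}+2\alpha\gamma|\Omega|.
\end{equation*}
Next I test the Stokes equation $\eqref{EQ-t19h}_1$ with $u$ to get $(1-\alpha)\|\nabla u\|_{L^2}^2=-\int\nabla u:\mathbb{T}$; the $\alpha\mathbb{I}$ part drops out since $\operatorname{div}u=0$. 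Substituting, the stretching term becomes $-2(1-\alpha)\|\nabla u\|_{L^2}^2$, which has a good sign. Lemma \ref{lem_positivity} and Corollary \ref{cor_L1} give $\mathrm{tr}\,\mathbb{T}\ge0$, so $\int\mathrm{tr}\,\mathbb{T}=\|\mathrm{tr}\,\mathbb{T}\|_{L^1}$. Integrating in time yields
\begin{equation*}
\|\tau\|_{L^\infty_tL^1}+\|\nabla u\|_{L^2_tL^2}^2\le C(1+t)\bigl(1+\|\tau_0\|_{L^1}\bigr),
\end{equation*}
where the final conversion from $\mathrm{tr}\,\mathbb{T}$ to $\tau$ uses Corollary \ref{cor_L1}. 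Identifying this sign structure, namely that the stretching term in the trace equation is exactly the Stokes dissipation, is the key point. The rest is routine.

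\textbf{Step 2 ($L^2$ estimate for $\tau$).} I multiply $\eqref{EQ-all}_2$ by $\tau$ and integrate. The transport and boundary terms vanish as before, and the linear source satisfies $|2\alpha\int\mathbb{D}(u):\tau|\le C\|\nabla u\|_{L^2}^2+\frac\gamma2\|\tau\|_{L^2}^2$. For the stretching term I use the Ladyzhenskaya inequality:
\begin{equation*}
\Bigl|\int(\nabla u\tau+\tau\nabla^\top u):\tau\Bigr|\lesssim\|\nabla u\|_{L^2}\|\tau\|_{L^4}^2\lesssim\|\nabla u\|_{L^2}\|\tau\|_{L^2}\|\tau\|_{H^1}\le\frac\varepsilon2\|\nabla\tau\|_{L^2}^2+C(1+\|\nabla u\|_{L^2}^2)\|\tau\|_{L^2}^2.
\end{equation*}
Gronwall's inequality, together with $\int_0^t\|\nabla u\|_{L^2}^2\,{\rm d}s$ bounded from Step 1, then gives $\|\tau\|_{L^\infty_tL^2}^2+\|\tau\|_{L^2_tH^1}^2\le C(t,\|\tau_0\|_{L^2})$. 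The 2D Nash inequality $\|\tau\|_{L^2}^2\lesssim\|\tau\|_{L^1}\|\tau\|_{H^1}$ is not needed for this closure; it would only serve to make the constant depend on the $L^1$ bound rather than exponentially on $\|\nabla u\|_{L^2_tL^2}$.

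\textbf{Step 3 (velocity).} By the second part of Lemma \ref{stoke} with $F=\tau/(1-\alpha)$, I get $\|u\|_{H^1}\le C\|\tau\|_{L^2}$, hence $u\in L^\infty_tH^1$. By the first part with $k=0$ and $f=\operatorname{div}\tau/(1-\alpha)$, I get $\|u\|_{H^2}\le C\|\tau\|_{H^1}$, hence $u\in L^2_tH^2$. Collecting Steps 1--3 defines $D_1$, which depends on $T$, $\|\tau_0\|_{L^2}$, and the parameters.

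The step I expect to be most delicate is Step 1: making sure the trace identity and the Stokes energy identity combine with the correct sign, and that positivity (Lemma \ref{lem_positivity}, which needs $u\in L^1_tW^{1,\infty}$, available from $u\in L^2_tH^3$ for strong solutions) is legitimately invoked.
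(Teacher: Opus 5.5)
Your proposal is correct and follows the paper's proof essentially step for step. You take the trace of the $\mathbb{T}$-equation and combine it with the Stokes energy identity, using positivity from Lemma \ref{lem_positivity} and Corollary \ref{cor_L1}, to control $\|\tau\|_{L^\infty_tL^1}$ and $\|\nabla u\|_{L^2_tL^2}^2$. You then close the $L^2$ energy estimate for $\tau$ with Ladyzhenskaya and Gronwall, and Lemma \ref{stoke} gives the velocity bounds. Your bookkeeping is, if anything, slightly more careful than the paper's: the factor $(1+t)(1+\|\tau_0\|_{L^1})$ accounts for the additive constant in Corollary \ref{cor_L1}, and you check that $u\in L^2_tH^3\subset L^1_tW^{1,\infty}$ before invoking Lemma \ref{lem_positivity}.
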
	
		\begin{proof}
			Multiplying $(\ref{EQ-t19h})_1$ by $2u$ and integrating the result over $\Omega$, we have
			\begin{align}\label{u-2u-eq}
				 2(1-\alpha)\|\nabla u\|_{L^2}^2=2\int\div\mathbb{T}\cdot u {\rm d}x{\rm d}y.
			\end{align}
			Taking  trace of $(\ref{EQ-t19h})_2$ and integrating the result over $\Omega$ by parts, we obtain
			\begin{equation}\label{4.0}
				\frac{\mathrm{d} }{\mathrm{d}t}||\mathrm{tr}\mathbb{T}||_{L^1}+\gamma||\mathrm{tr}\mathbb{T}||_{L^1}   =\int \mathrm{tr}(\nabla u\mathbb{T}+\mathbb{T}\nabla^\top u) {\rm d}x{\rm d}y + 2\alpha\gamma |\Omega|.
			\end{equation}
			Furthermore, from integration by parts, we observe that 
			$$\int\mathrm{tr}(\nabla u\mathbb{T}+\mathbb{T}\nabla^\top u) {\rm d}x{\rm d}y=-2\int \div\mathbb{T}\cdot u {\rm d}x{\rm d}y.$$
			Therefore, combining above result with \eqref{u-2u-eq}-\eqref{4.0}, we have
			\begin{equation}\label{EQ-t20h}
				\frac{\rm d}{\rm dt}\left\|\mathrm{tr}\mathbb{T}\right\|_{L^{1}}+\gamma||\mathrm{tr}\mathbb{T}||_{L^1}  +2 (1-\alpha)\|\nabla u\|_{L^{2}}^{2} = 2\alpha\gamma |\Omega|.
			\end{equation}
			Integrating \eqref{EQ-t20h} over $[0, t]$, using Corollary \ref{cor_L1}, we further obtain 
			\begin{equation}\label{EQ-t24h}
				\|\tau(t)\|_{L^{1}}+\gamma\|\tau(t)\|_{L^1_tL^{1}}+2(1-\alpha)\|\nabla u\|_{L_t^2L^2}^{2}\leq  C(T+ \|\tau_{0}\|_{L^{1}}). 
			\end{equation}
			Next, multiplying $(\ref{EQ-all})_2$ by $\tau$ over $\Omega$, we obtain
			\begin{align}\label{EQ-t26h}
				\frac{1}{2}\frac{\rm d}{\rm dt}\|\tau\|_{L^2}^2+\gamma\|\tau\|_{L^2}^2+\varepsilon\|\nabla\tau\|_{L^2}^2
				=\int(\nabla u\tau+\tau\nabla^\top u):\tau+\alpha(\nabla u+\nabla^\top u):\tau\,{\rm d}x{\rm d}y,			
			\end{align}
			where the two terms on the right-hand side can be estimated by applying H\"{o}lder's inequality, Young's inequality, and the Ladyzhenskaya inequality as follows
			\begin{align*}
				&\left|\int(\nabla u\tau+\tau\nabla^\top u):\tau\,{\rm d}x{\rm d}y\right|\leq2\|\nabla u\|_{L^2}\|\tau\|_{L^4}^2
				\leq\frac{\varepsilon}{2}\|\nabla\tau\|_{L^2}^2+C(\varepsilon)(\|\nabla u\|_{L^2}+\|\nabla u\|_{L^2}^2)\|\tau\|_{L^2}^2.\\
				&\left|\int\alpha(\nabla u+\nabla^\top u):\tau\,{\rm d}x{\rm d}y\right|\leq2\alpha\|\tau\|_{L^2}\|\nabla u\|_{L^2}
				\leq\frac{\gamma}{2}\|\tau\|_{L^2}^2+C(\alpha,\gamma)\|\nabla u\|_{L^2}^2.
			\end{align*}
			Substituting above estimates into \eqref{EQ-t26h}, we get 
			\begin{align*} 
				 \frac{\rm d}{\rm dt}\|\tau\|_{L^2}^2+\gamma\|\tau\|_{L^2}^2+\varepsilon\|\nabla\tau\|_{L^2}^2
				\lesssim (\|\nabla u\|_{L^2}+\|\nabla u\|_{L^2}^2)\|\tau\|_{L^2}^2 + \|\nabla u\|_{L^2}^2,			
			\end{align*}
            which, together with \eqref{EQ-t24h} and Gronwall inequality, implies that 
			\begin{align*}
				 &\|\tau(t)\|_{L^2}^2+\|\tau\|_{L_t^2H^1}^2 \\ 
				&\leq  C(\alpha,\gamma,\varepsilon)\left( \|\tau_0\|_{L^2}^2+\|\nabla u\|_{L_t^2L^2}^2 \right)\exp(\|\nabla u\|_{L_t^2L^2}^2+\|\nabla u\|_{L_t^1L^2}), \\
				&\leq C(\alpha,\gamma,\varepsilon)\left( \|\tau_0\|_{L^2}^2+T+\|\tau_{0}\|_{L^{1}}\right)\exp(CT+C\|\tau_{0}\|_{L^{1}}  ). 
			\end{align*}
            Moreover, combining the above estimate with Lemma \ref{stoke}, we  obtain that 
            \begin{align}\label{D_1}
                &\| u \|_{L_t^\infty H^1}^2+\| \tau \|_{L_t^\infty L^2}^2+\|u\|_{L_t^2 H^2}^2   +\|\tau\|_{L_t^2H^1}^2 \notag\\ 
                \lesssim& \left( \|\tau_0\|_{L^2}^2+T+\|\tau_{0}\|_{L^{1}}\right)\exp(CT+C\|\tau_{0}\|_{L^{1}}  )=:D_1. 
            \end{align} 
			The proof   is   complete.
		\end{proof} 
		\begin{lem}\label{thm5}
			Under the assumptions of Theorem \ref{main_thm}, let $(u, \tau)$ be the solution of problem \eqref{EQ-all}-\eqref{boundary-con} on $[0, T]\times \Omega$. Then, for all $t\in[0, T]$, we have 
			$$\|u\|_{L_t ^\infty H^2}^2+\|\tau\|_{L_t ^\infty H^1}^2 + \|u\|_{L_t ^2 H^3}^2 + \|\tau\|_{L_t ^2 H^2}^2\leq  D_2 .
			$$
			where $D_2$ is defined in \eqref{D2}.
		\end{lem}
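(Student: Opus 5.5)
The plan is to carry out an $H^1$-energy estimate for $\tau$ directly on $(\ref{EQ-all})_2$, using Lemma \ref{thm4} to supply the time-integrable coefficients. Higher regularity of $u$ then follows from the Stokes estimate in Lemma \ref{stoke}. Since the solution is strong, the computations can be done at the regularity level of Proposition \ref{Local_Existence}.

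\emph{Step 1: testing with $-\Delta\tau$.} Take the $L^2$-inner product of $(\ref{EQ-all})_2$ with $-\Delta\tau$. The Neumann condition $\partial_n\tau=0$ gives $-\int\partial_t\tau:\Delta\tau=\frac12\frac{\rm d}{\rm dt}\|\nabla\tau\|_{L^2}^2$ and $-\gamma\int\tau:\Delta\tau=\gamma\|\nabla\tau\|_{L^2}^2$ with no boundary terms. The remaining terms are estimated directly, without further integration by parts, so the boundary causes no difficulty:
\begin{align*}
\frac12\frac{\rm d}{\rm dt}\|\nabla\tau\|_{L^2}^2+\gamma\|\nabla\tau\|_{L^2}^2+\varepsilon\|\Delta\tau\|_{L^2}^2
&\le \int |u||\nabla\tau||\Delta\tau|+2|\nabla u||\tau||\Delta\tau|+2\alpha|\nabla u||\Delta\tau|\,{\rm d}x{\rm d}y .
\end{align*}
Throughout, $\|\tau\|_{H^2}\lesssim\|\Delta\tau\|_{L^2}+\|\tau\|_{L^2}$ (elliptic Neumann regularity) controls $\nabla^2\tau$.

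\emph{Step 2: estimating the nonlinear terms linearly in $\|\tau\|_{H^1}^2$.} This is the main point, since a naive bound is cubic. For the stretching term, use the Gagliardo--Nirenberg/Ladyzhenskaya inequalities together with Lemma \ref{stoke} (case $k=0$, $f=\div\tau$):
\[
\|\nabla u\|_{L^4}\lesssim\|u\|_{H^1}^{1/2}\|u\|_{H^2}^{1/2}\lesssim D_1^{1/4}\|\tau\|_{H^1}^{1/2},
\qquad
\|\tau\|_{L^4}\lesssim\|\tau\|_{L^2}^{1/2}\|\tau\|_{H^1}^{1/2}\lesssim D_1^{1/4}\|\tau\|_{H^1}^{1/2}.
\]
Hence
\[
\int|\nabla u||\tau||\Delta\tau|\lesssim D_1^{1/2}\|\tau\|_{H^1}\|\Delta\tau\|_{L^2}\le\frac\varepsilon4\|\Delta\tau\|_{L^2}^2+C\|\tau\|_{H^1}^2 .
\]
This bound is linear, and this is exactly where the uniform $L^\infty_tH^1$ bound for $u$ and the $L^\infty_tL^2$ bound for $\tau$ from Lemma \ref{thm4} are used. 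For the transport term, use $\|u\|_{L^\infty}\lesssim\|u\|_{H^2}$:
\[
\int|u||\nabla\tau||\Delta\tau|\le\frac\varepsilon4\|\Delta\tau\|_{L^2}^2+C\|u\|_{H^2}^2\|\nabla\tau\|_{L^2}^2 .
\]
The coefficient $\|u\|_{H^2}^2$ lies in $L^1(0,T)$ with integral at most $D_1$. The linear term is bounded by $\frac\varepsilon4\|\Delta\tau\|_{L^2}^2+C\|\nabla u\|_{L^2}^2$.

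\emph{Step 3: closing the estimate.} Combining the above gives
\[
\frac{\rm d}{\rm dt}\|\nabla\tau\|_{L^2}^2+\varepsilon\|\Delta\tau\|_{L^2}^2\lesssim(1+\|u\|_{H^2}^2)\|\nabla\tau\|_{L^2}^2+\|\tau\|_{L^2}^2+\|\nabla u\|_{L^2}^2 .
\]
Gronwall's inequality and Lemma \ref{thm4} then yield
\[
\|\tau\|_{L^\infty_tH^1}^2+\|\tau\|_{L^2_tH^2}^2\lesssim(\|\tau_0\|_{H^1}^2+D_1)\exp\big(C(T+D_1)\big)=:D_2',
\]
where the $L^2$ parts are already controlled by $D_1$. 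Finally, Lemma \ref{stoke} with $f=\div\tau$ gives $\|u\|_{H^2}\lesssim\|\tau\|_{H^1}$ and, with $k=1$ (which uses the $C^{2,1}$ boundary), $\|u\|_{H^3}\lesssim\|\tau\|_{H^2}$. This produces $\|u\|_{L^\infty_tH^2}^2+\|u\|_{L^2_tH^3}^2\lesssim D_2'$, and $D_2$ is defined as $C D_2'$. If needed, a bound on $\|\tau_t\|_{L^2_tL^2}$ follows afterwards from Lemma \ref{lem_tau_para} with $v=u$ and $g$ equal to the stretching terms, which are now bounded in $L^2_tL^2$.

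The expected obstacle is Step 2: the stretching term must be split so that it is only quadratic in $\|\tau\|_{H^1}$, with the extra factors absorbed by the Lemma \ref{thm4} bounds. If the interpolation above turned out to be insufficient, I would fall back on $\|\nabla u\|_{L^4}\lesssim\|u\|_{H^2}$ and use $\|u\|_{H^2}^2\in L^1_t$ as the Gronwall weight.
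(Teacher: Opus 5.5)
Your proof is correct and is essentially the paper's: test $(\ref{EQ-all})_2$ with $-\Delta\tau$, close with Gronwall using Lemma \ref{thm4}, and recover $u$ from Lemma \ref{stoke} with $k=0,1$. The only difference is the stretching term. The paper bounds it exactly as in your fallback, by $\|u\|_{H^2}\|\tau\|_{H^1}\|\Delta\tau\|_{L^2}$ with $\|u\|_{H^2}^2\in L^1_t$ as the Gronwall weight, so it was never a real obstacle. Your interpolation instead gives $CD_1\|\tau\|_{H^1}^2$ (your Young step silently drops this $D_1$), and this is best treated as a source term, since $\int_0^t\|\tau\|_{H^1}^2\,{\rm d}s\le D_1$; used as a Gronwall coefficient, as you do, it yields $e^{CD_1(1+T)}$ instead of the paper's $e^{CD_1}$, which is harmless for the continuation argument.
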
		
		\begin{proof}
			Multiplying $(\ref{EQ-all})_2$ by  $-\Delta\tau$, and integrating the result over $\Omega$ by parts, we have
			\begin{align}\label{nabla-tau-L2-est}
				&\frac{1}{2}\frac{\rm d}{\rm d t}||\nabla\tau||_{L^2}^2+\gamma||\nabla\tau\|_{L^2}^2+\varepsilon||\Delta\tau||_{L^2}^2 \nonumber\\
				&\,\,=\int-[\alpha(\nabla u+\nabla^\top u)]:\Delta\tau-(\nabla u\cdot\tau+(\tau\cdot\nabla^\top) u):\Delta\tau+(u\cdot\nabla)\tau:\Delta\tau  {\rm d}x{\rm d}y,\nonumber\\
				&\,\,:=M_1+M_2+M_3 .
			\end{align}
			According to H\"{o}lder's inequality, Young's inequality, Sobolev inequality and Lemma \ref{thm4}, we can  estimate   $M_j~(j=1,2,3)$ as follows:
			\begin{align*}
				|M_1| \leq\,&2\alpha\|\Delta\tau\|_{L^2}\|\nabla u\|_{L^2}\leq\frac{\varepsilon}{6}\|\Delta\tau\|_{L^2}^2
				+C  D_1 , \\
				|M_2| \leq\,&\|\Delta\tau\|_{L^2}\|\tau\|_{L^4}\|\nabla u\|_{L^4}
                \lesssim \|\Delta\tau\|_{L^2}(\|\tau\|_{L^2} +\|\nabla \tau\|_{L^2})\| u\|_{H^2}  \\
                \leq\,&\frac{\varepsilon}{6}\|\Delta\tau\|_{L^2}^2
				+ CD_1\|u\|_{H^2}^2  +C\|u\|_{H^2}^2\|\nabla\tau\|_{L^2}^2. \\		
				|M_3|\leq\,&\|u\|_{L^\infty}\|\nabla\tau\|_{L^2}\|\Delta\tau\|_{L^2}
				\leq\frac{\varepsilon}{6}\|\Delta\tau\|_{L^2}^2+C\|u\|_{H^2}^2 \|\nabla\tau\|_{L^2}^2.
			\end{align*}
			Substituting the estimates of $M_1$-$M_3$  into 
			\eqref{nabla-tau-L2-est}, we find that
			$$  \frac{\mathrm{d}}{\mathrm{d}t} \| \nabla \tau \|_{L^2}^2+||\nabla\tau\|_{L^2}^2 +   \| \Delta \tau \|_{L^2}^2 \leq  C  \|u\|_{H^2}^2 \|\nabla\tau\|_{L^2}^2+C D_1 \left(1+\|u\|_{H^2}^2\right),  
			$$
            which, together with
			  Gronwall  inequality, implies that 
			\begin{align}\label{eq_tau_H1}
				\|\nabla\tau(t)\|_{L^{2}}^{2}+\|\nabla \tau\|_{L^{2}H^{1}}^{2}  \leq  
				C (D_1T+\|\tau_0\|_{H^{1}}^{ 2 }+  D_1^2 ) \mathrm{e}^{ CD_1}.
			\end{align}
            Then, combining \eqref{eq_tau_H1} with Lemmas \ref{stoke} and \ref{thm4}, we  obtain that  
			\begin{align}\label{D2}
			    &\|u\|_{L_t ^\infty H^2}^2+\|\tau\|_{L_t ^\infty H^1}^2 + \|u\|_{L_t ^2 H^3}^2 + \|\tau\|_{L_t ^2 H^2}^2,\nonumber\\
                &\lesssim  (D_1T+\|\tau_0\|_{H^{1}}^{ 2 }+ D_1^2     )\mathrm{e}^{ CD_1 }=:D_2 .
			\end{align}
			Thus, the proof of Lemma \ref{thm5} is finished.
		\end{proof}
		We proceed by deriving estimates for the time derivatives $(\partial_t u, \partial_t \tau)$ in Lemma \ref{thm6}.
		\begin{lem}\label{thm6}
			Under the assumptions of Theorem \ref{main_thm}, let $(u, \tau)$ be the solution of problem \eqref{EQ-all}-\eqref{boundary-con} on $[0, T]\times \Omega$. Then, for all $t\in[0, T]$, we have 
			$$ \| \partial_t \tau \|_{L_t^2 L^2}^2 + \|\partial_t u\|_{L_t^2 H^1}^2 \leq D_3,
			$$
			where $D_3$ is defined in \eqref{D3}.
		\end{lem}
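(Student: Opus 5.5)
The plan is to estimate $\partial_t\tau$ directly from the stress equation, using the bounds $D_1,D_2$ of Lemmas \ref{thm4} and \ref{thm5}. Then $\partial_t u$ will be recovered from the time-differentiated Stokes problem, exactly as in the compactness step of Section \ref{Sec_Local}.

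\emph{Step 1: the estimate for $\partial_t\tau$.} From $(\ref{EQ-all})_2$ we write
\begin{equation*}
\partial_t\tau=\varepsilon\Delta\tau-\gamma\tau-(u\cdot\nabla)\tau+\nabla u\tau+\tau\nabla^\top u+2\alpha\mathbb{D}(u)
\end{equation*}
and bound each term in $L^2_tL^2$.
\begin{itemize}
\item The linear terms give at most $C(\|\tau\|_{L^2_tH^2}^2+\|\tau\|_{L^2_tL^2}^2+\|u\|_{L^2_tH^1}^2)\lesssim D_1+D_2$.
\item For the transport term we use $\|u\|_{L^\infty}\lesssim\|u\|_{H^2}$, so that $\|(u\cdot\nabla)\tau\|_{L^2_tL^2}^2\lesssim \|u\|_{L^\infty_tH^2}^2\|\nabla\tau\|_{L^2_tL^2}^2\lesssim D_2^2$.
\item For the stretching terms we use the two-dimensional embeddings $H^1\hookrightarrow L^4$ and $H^2\hookrightarrow W^{1,4}$, so that $\|\nabla u\,\tau\|_{L^2}\le\|\nabla u\|_{L^4}\|\tau\|_{L^4}\lesssim\|u\|_{H^2}\|\tau\|_{H^1}$. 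This gives a bound $TD_2^2$.
\end{itemize}
Alternatively, one could test the equation with $\partial_t\tau$, as in \eqref{nab-tau-L2-est}, and absorb the terms; the direct bound is simpler since all needed norms are already controlled.

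\emph{Step 2: the estimate for $\partial_t u$.} Formally, differentiate $(\ref{EQ-all})_1$ in time. Then $(\partial_t u,\partial_t p)$ solves the Stokes problem \eqref{EQ-t22h} with right-hand side $(1-\alpha)^{-1}\mathrm{div}\,\partial_t\tau$, homogeneous Dirichlet data and $\mathrm{div}\,\partial_t u=0$. The second part of Lemma \ref{stoke} gives $\|\partial_t u\|_{H^1}\le C\|\partial_t\tau\|_{L^2}$ pointwise in time. Integrating in time yields $\|\partial_t u\|_{L^2_tH^1}^2\le C\|\partial_t\tau\|_{L^2_tL^2}^2$.

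\emph{Step 3: rigor and the main obstacle.} Step 2 requires $\partial_t\tau\in L^2H^1$ if we differentiate literally, whereas we only know $\partial_t\tau\in L^2L^2$. I would avoid this by working with difference quotients. Apply the linear Stokes solution operator $S:\mathrm{div}F\mapsto u$ to $F=(\tau(t+h)-\tau(t))/h$. Lemma \ref{stoke}, which only needs $F\in H^1$, gives
\begin{equation*}
\|(u(t+h)-u(t))/h\|_{H^1}\le C\|(\tau(t+h)-\tau(t))/h\|_{L^2}.
\end{equation*}
The right-hand side is bounded in $L^2_t$ by $\|\partial_t\tau\|_{L^2_tL^2}$. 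Passing to the limit $h\to0$ then yields $\partial_t u\in L^2(0,t;H^1)$ with the same bound.

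Collecting Steps 1–3 defines $D_3:=C(D_1+D_2+(1+T)D_2^2)$, which completes the estimate.
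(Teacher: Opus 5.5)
Your argument is correct. It differs from the paper's only in how you control $\partial_t\tau$.

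The paper tests $(\ref{EQ-all})_2$ with $\partial_t\tau$ to obtain \eqref{partial-t-tau-est}. It then bounds $N_1$--$N_3$ with the same embeddings you use, absorbs the $\partial_t\tau$ terms and integrates in time. As a by-product this reproduces the bound on $\|\tau(t)\|_{H^1}^2$, which Lemma \ref{thm5} already gives.

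You instead read $\partial_t\tau$ off the equation and bound each term in $L^2_tL^2$ directly from $D_1$ and $D_2$. This is more elementary: there is no absorption and no integration by parts in time of $\int\Delta\tau:\partial_t\tau$. That step needs the Neumann condition and a chain rule for $\|\nabla\tau\|_{L^2}^2$. You avoid it because $\|\Delta\tau\|_{L^2_tL^2}$ is already controlled by Lemma \ref{thm5}.

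The only cost is that your constant $C(D_1+D_2+(1+T)D_2^2)$ has a different form from the $D_3$ in \eqref{D3}. For instance, you have terms $D_2$ and $D_2^2$ not multiplied by $T$, where the paper has $\|\tau_0\|_{H^1}^2+D_1D_2+TD_2^2$. Since only finiteness in terms of $T$ and the data matters for the continuation argument, this is harmless.

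For $\partial_t u$, both arguments rest on the second estimate of Lemma \ref{stoke} applied to the time-differentiated Stokes problem. The paper simply cites the lemma. Your difference-quotient argument supplies the justification the paper omits. Equivalently, the Stokes solution map $F\mapsto u$ is bounded and linear from $L^2$ to $H^1_0$, so it commutes with $\partial_t$ on $H^1(0,T;L^2)$.
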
					
		\begin{proof}
			Multiplying $(\ref{EQ-all})_2$ by $\partial_{t}\tau$, and integrating the result over $\Omega$ by parts, we obtain
			\begin{align}\label{partial-t-tau-est}
				\| \partial_t \tau \|_{L^2}^2 + \frac{1}{2} \frac{\mathrm{d}}{\mathrm{d}t} (\gamma \| \tau \|_{L^2}^2+\varepsilon \| \nabla \tau \|_{L^2}^2)  &=\int \alpha(\nabla u+\nabla^\top u):\partial_{t}\tau+(\nabla u\tau+\tau\nabla^\top u):\partial_{t}\tau\nonumber\\
				& \quad-(u \cdot \nabla) \tau : \partial_t \tau{\rm d}x{\rm d}y := N_1 + N_2 + N_3 . 
			\end{align}
			Following the same approach as for $M_1$-$M_3$, we estimate $N_1$-$N_3$ as follows:
			\begin{align*}
				& |N_1| \leq \frac{1}{6} \| \partial_t \tau \|_{L^2}^2 + C \| \nabla u \|_{L^2}^2\leq \frac{1}{6} \| \partial_t \tau \|_{L^2}^2 + CD_1 .\\
				& |N_2| \leq \frac{1}{6} \| \partial_t \tau \|_{L^2}^2 + C \| \nabla u \|_{L^4}^2 \| \tau \|_{L^4}^2\leq \frac{1}{6} \| \partial_t \tau \|_{L^2}^2 + CD_2^2.\\
				& |N_3| \leq \frac{1}{6} \| \partial_t \tau \|_{L^2}^2 + C \| u \|_{L^4}^2 \| \nabla \tau \|_{L^4}^2\leq\frac{1}{6} \| \partial_t \tau \|_{L^2}^2 + CD_1\| \nabla \tau \|_{H^1}^2.
			\end{align*}
			Substituting the above estimates into \eqref{partial-t-tau-est}, integrating the result over $[0,t]$ using Lemma \ref{stoke}, we obtain
			\begin{align}\label{D3}
				\|\partial_t u\|_{L_t^2 H^1}^2+\| \partial_t \tau \|_{L_t^2 L^2}^2 + \| \tau(t) \|_{H^1}^2 \lesssim \|\tau_{0}\|_{H^1}^2+ T\left(D_1+D_2^2 \right)+D_1D_2 := D_3.
			\end{align} 
			This concludes the proof of Lemma \ref{thm6}.
		\end{proof}
		
		\section{The proof of Theorem \ref{main_thm}}	
		Under the assumptions of Theorem \ref{main_thm}, let $(u, \tau)$ be the solution of problem \eqref{EQ-all}-\eqref{boundary-con} on $[0, T]\times \Omega$. With the help of Lemmas \ref{thm4}-\ref{thm6} one can easily find that $(u,\tau)$ satisfies the regularity in Theorem \ref{main_thm}. Moreover, 
        there exists a constant $C_T$ such that 
		$$\|u\|_{L_t ^\infty H^2}^2+\|\tau\|_{L_t ^\infty L^1} +\|\tau\|_{L_t^\infty H^1}^2 + \|u\|_{L_t^2 H^3}^2 + \|\tau\|_{L_t^2 H^2}^2 + \|\partial_tu\|_{L_t^2 H^1}^2 + \|\partial_t\tau\|_{L_t^2 L^2}^2\leq C_T.
			$$
            Then, combining the above estimate and Proposition \ref{Local_Existence}, one can complete the proof of Theorem   \ref{main_thm} by the standard
continuity method. Refer to \cite{Constantin_2012} and \cite{Kupferman_2007} for detailed discuss.
		\section*{Data availability statement}
		\noindent No new data were created or analysed in this study.
\section*{Conflict of interest} 
\noindent The authors declare that they have no conflict of interest.	
		\section*{Acknowledgement} 
        The work of Y. H. Wang was partially supported by the National Natural Science Foundation of China grant 12401274 and the Natural Science Foundation of Hunan Province grant 2024JJ6302.
		The work of S. H. Zhang was supported by the
		Postgraduate Scientific Research Innovation Project of Hunan Province grant
		CX20250720.
		
	\end{document}